\def\snap@providesfile#1[#2]{%
  \wlog{File: #1 #2}%
  \if\expandafter\snap@graphic@test\expanded{#2}@@\@nil
    \snap@record@graphic#1\relax #2 (type ??)\@nil
  \else
    \expandafter\xdef\csname ver@#1\endcsname{#2}%
  \fi
  \endgroup
}
\tikzset{
  vertex/.style={circle, draw=white,
    fill=black!50, thick, inner sep=0pt, minimum size=1.3mm},
  vertexbb/.style={circle, draw=black,
    fill=black!40, thin, inner sep=0pt, minimum size=1.1mm,
    opacity=1},
  vertexbw/.style={circle, draw=black,
    fill=white, thin, inner sep=0pt, minimum size=1.1mm, opacity=1},
  lab/.style={circle,fill=white,outer sep=2pt,inner
    sep=0pt,auto,midway,scale=.6},
  lab2/.style={circle,fill=white,outer sep=4pt,inner sep=0pt,midway,auto,scale=.6},
  arete/.style={
    ultra thick},
  presimp/.style={white, thick},
  simp/.style={gray, thin, opacity=1},
  behind/.style={gray, thick, opacity=.3},
  prep/.style={line width=3pt, white},
  main/.style={decoration={random
      steps,segment length=5pt,amplitude=1pt}},
  maing/.style={decoration={random
    steps,segment length=6pt,amplitude=2pt}},
}
\numberwithin{equation}{section}
\definecolor{zeroS}{HTML}{ff0000}
\definecolor{oneS}{HTML}{009000}
\definecolor{twoS}{HTML}{0000ff}
\setlist{leftmargin=*,labelindent=17.0pt}
\newcommand{\ig}{\includegraphics}
\renewcommand{\rb}{\raisebox}
\newcommand\risSpdf[6]{\rb{#1pt}[#5pt][#6pt]{\begin{picture}(#4,15)(0,0)
  \put(0,0){\ig[width=#4pt]{#2.pdf}} #3
     \end{picture}}}
\title{Partial Duality of Hypermaps}
\shorttitle{Partial Duality of Hypermaps}
\author{S.~Chmutov \& F.~Vignes-Tourneret}
\dedicatory{}
\begin{document}
\maketitle

\begin{fabsmfabstract}
We introduce partial duality of hypermaps, which include the classical
Euler-Poincaré duality as a particular case. Combinatorially,
hypermaps may be described in one of three ways: as three involutions
on the set of flags (bi-rotation system or $\tau$-model), or as three
permutations on the set of half-edges (rotation system or
$\sigma$-model in orientable case), or as edge 3-coloured graphs. We
express partial duality in each of these models. We give a formula for the genus change under partial duality.
\end{fabsmfabstract}
\begin{fabsmfMSC}
  05C10, 05C65, 57M15, 57Q15
\end{fabsmfMSC}
\begin{fabsmfkeywords}
  maps, hypermaps, partial duality, permutational models, rotation system, bi-rotation system, edge coloured graphs
\end{fabsmfkeywords}

\vfil
\tableofcontents
\newpage

\section*{Introduction}
\label{intro}
\etoctoccontentsline*{section}{Introduction}{1}

\emph{Maps} can be thought of as graphs embedded into surfaces. \emph{Hypermaps} are hypergraphs embedded into surfaces. In other words, in hypermaps a (hyper) edge is allowed to connect more than two vertices, so having more than two \emph{half-edges}, or just a single half-edge (see Figure \ref{f-LocalHypermap}).

One way of combinatorially study oriented hypermaps, the
\emph{rotation system} or the $\sigma$-model, is to consider
permutations of its half-edges, also know as \emph{darts}, around each
vertex, around each hyperedge, and around each face according to the
orientation. This model has been carefully worked out by R.~Cori
\cite{Cori75}, however it can be traced back to L.~Heffter
\cite{Heffter1891aa}. It became popular after the work of
J.~R.~Edmonds \cite{Edmonds1960aa}. It is very important for the Grothendieck dessins d'Enfants theory, see \cite{Lando2004aa}, where the $\sigma$-model is called \emph{$3$-constellation}. We review the $\sigma$-model in \cref{ssec-sigma-model}.

Another combinatorial description of hypermaps, the \emph{bi-rotation
  system} or the $\tau$-model, goes through three involutions acting
on the set of \emph{local flags}, also know as \emph{blades},
represented by triples (vertex, edge, face). The motivation for this
model was the study of symmetry of regular polyhedra which is a group
generated by reflections (involutions). As such it may be traced back
to Ancient Greeks. It was used systematically by F.~Klein in
\cite{Kelin1956aa} and later by Coxeter and Moser in
\cite{Coxeter1980aa}. More recently this model was used in the context
of maps and hypermaps in \cite{Wilson1978aa,Jones1983aa,James1988aa,Jones2009aa}. We review the $\tau$-model in \cref{ssec-tau-model}.

In 1975 T.~Walsh noted \cite{Walsh1975aa} that, if we consider a small
regular neighbourhood of vertices and hyperedges, then we can regard
hypermaps as cell decomposition of a compact closed surface into disks
of three types, vertices, hyperedges, and faces, such that the disks
of the same type do not intersect and the disks of different types may
intersect only on arcs of their boundaries. These arcs form a
3-regular graph whose edges are coloured in 3 colours depending on the
types of cells they are adjacent to. The arcs of intersection of
hyperedge-disks with face-disks bear the colour 0. The colour 1 stands
for the arcs of intersection of vertex-disks with face-disks. And the
arcs of intersection of vertex-disks with hyperedge-disks are coloured
by 2. Thus we come to the concept of $[2]$-coloured graphs, where $[2]$
stands for the set of three colours $[2]:=\{0,1,2\}$. It turns out that
such a $[2]$-coloured graph carries all the information about the
original hypermap. This gives another combinatorial model for
description of hypermaps. We review this model in
\cref{ssec-edge-coloured-graphs}.
\tdplotsetmaincoords{65}{10}
  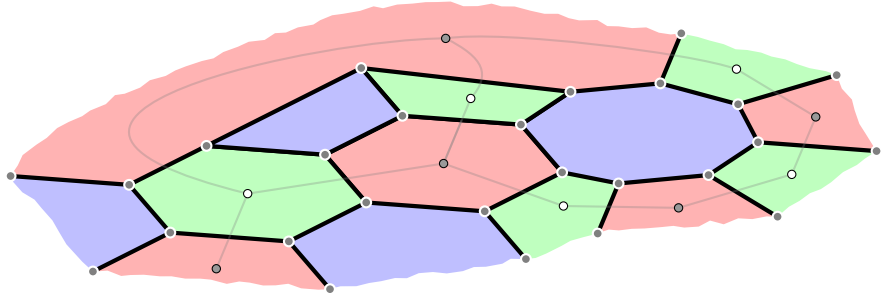
\begin{figure}[!htp]
    \centering
    \begin{tikzpicture}[tdplot_main_coords,scale=.8]
      \def\u{2}; 
      \colorlet{V}{red!30};
      \colorlet{E}{green!50!white!50};
      \colorlet{F}{blue!50!white!50};
      \tdplotsetcoord{a}{\u}{90}{0};
      \tdplotsetcoord{b}{\u}{90}{60};
      \tdplotsetcoord{c}{\u}{90}{120};
      \tdplotsetcoord{d}{\u}{90}{180};
      \tdplotsetcoord{e}{\u}{90}{240};
      \tdplotsetcoord{f}{\u}{90}{300};

      \tdplotsetcoord{h}{2*\u}{90}{120};
      \tdplotsetcoord{i}{2*\u}{90}{180};
      \tdplotsetcoord{j}{2*\u}{90}{240};
      \tdplotsetcoord{k}{2*\u}{90}{300};

      \coordinate (x) at ($(b)+(\u,0)$); 
      \coordinate (g) at ($(x)+(135:\u)$);
      \coordinate (m) at ($(x)+(90:\u)$);
      \coordinate (n) at ($(x)+(45:\u)$);
      \coordinate (o) at ($(x)+(0:\u)$);
      \coordinate (p) at ($(x)+(-45:\u)$);
      \coordinate (l) at ($(x)+(-90:\u)$);

      \coordinate (y) at ($(e)+(-\u,0)$);
      \coordinate (q) at ($(y)+(180:\u)$);
      \coordinate (r) at ($(y)+(240:\u)$);
      \coordinate (s) at ($(q)+(-\u,0)$);
      \coordinate (t) at ($(y)+(240:2*\u)$);
      \coordinate (u) at ($(y)+(300:2*\u)$);
      \coordinate (v) at ($(x)+(-90:2*\u)$);
      \coordinate (w) at ($(x)+(-45:2*\u)$);
      \coordinate (z) at ($(x)+(0:2*\u)$);
      \coordinate (aa) at ($(x)+(45:2*\u)$);
      \coordinate (ab) at ($(x)+(90:2*\u)$);

      \coordinate (ca) at (-30:1.3*\u);
      \coordinate (cb) at ($(e)+(-60:\u)$);
      \coordinate (cc) at ($(x)+(-45/2:1.5*\u)$);
      \coordinate (cd) at ($(x)+(-3*45/2:1.5*\u)$);
      \coordinate (ce) at ($(x)+(45/2:1.5*\u)$);
      \coordinate (cf) at ($(x)+(3*45/2:1.5*\u)$);
      \coordinate (cg) at ($(y)+(0,-1.5*\u)$);
      \coordinate (ch) at ($(y)+(210:1.5*\u)$);
      \coordinate (ci) at (90:1.3*\u);
      \coordinate (cj) at (150:1.3*\u);
      \coordinate (ck) at ($(ci)+(110:1.2*\u)$);

      \tdplotsetcoord{ac}{3*\u}{90}{120};

      \fill[E] (m) -- (n) -- (aa) decorate [main] {-- (ab)} -- cycle;
      \fill[V] (n) -- (aa) decorate [main] {-- (z)} -- (o) -- cycle;
      \fill[E] (z) decorate [main] {--(w)} --(p)--(o)--cycle; \fill[V]
      (p)--(w) decorate [main] {--(v)} --(l)--cycle; \fill[E] (v)
      decorate [main]{--(k)}--(f)--(a)--(l)--cycle; \fill[F] (k)
      decorate [main]{--(u)}--(j)--(e)--(f)--cycle; \fill[V] (u)
      decorate [main]{--(t)}--(r)--(j)--cycle; \fill[F] (t) decorate
      [main]{--(s)}--(q)--(r)--cycle; \fill[V] (s) decorate
      [main]{.. controls ($(s)+(0,\u)$) and ($(ac)+(-\u,-\u)$)..(ac)
        .. controls ($(ac)+(\u,\u)$) and
        ($(ab)+(-\u,0)$)..(ab)}--(m)--(g)--(h)--(i)--(q)--cycle;

      \draw[arete] (m)--(ab); \draw[arete] (n)--(aa); \draw[arete]
      (o)--(z); \draw[arete] (p)--(w); \draw[arete] (l)--(v);
      \draw[arete] (f)--(k); \draw[arete] (j)--(u); \draw[arete]
      (r)--(t); \draw[arete] (q)--(s);
   
      \filldraw[arete, fill=V] (a) -- (b) -- (c) -- (d) -- (e) -- (f)
      -- cycle; \filldraw[arete, fill=F] (g) -- (m) -- (n) -- (o) --
      (p) -- (l) -- (a) -- (b) -- cycle; \filldraw[arete, fill=E] (e)
      -- (d) -- (i) -- (q) -- (r) -- (j) -- cycle; \filldraw[arete,
      fill=E] (b) -- (c) -- (h) --(g) -- cycle; \filldraw[arete,
      fill=F] (c) -- (h) -- (i) -- (d) -- cycle;
  
      \draw[behind] (0,0)--(ci);
      \draw[behind] (0,0)--(cy);
      \draw[behind] (0,0)--(ca);
      \draw[behind] (0,0)--(y);
      \draw[behind, tension=1] (ci).. controls ($(ci)+(80:1)$) and
      ($(ck)+(-40:1)$) ..(ck);
      \draw[behind, tension=.2] (y).. controls ($(y)+(160:3*\u)$) and
      ($(ck)+(190:\u)$) ..(ck);
      \draw[behind] (ck).. controls ($(ck)+(30:1)$) and
      ($(cf)+(150:1)$) ..(cf);
      \draw[behind] (cf)--(ce);
      \draw[behind] (cc)--(ce);
      \draw[behind] (cc)--(cd);
      \draw[behind] (ca)--(cd);
      \draw[behind] (y)--(cg);

      \node[vertex] at (a) {};
      \node[vertex] at (b) {};
      \node[vertex] at (c) {};
      \node[vertex] at (d) {};
      \node[vertex] at (e) {};
      \node[vertex] at (f) {};
      \node[vertex] at (g) {};
      \node[vertex] at (h) {};
      \node[vertex] at (i) {};
      \node[vertex] at (j) {};
      \node[vertex] at (k) {};
      \node[vertex] at (l) {};
      \node[vertex] at (m) {};
      \node[vertex] at (n) {};
      \node[vertex] at (o) {};
      \node[vertex] at (p) {};
      \node[vertex] at (q) {};
      \node[vertex] at (r) {};
      \node[vertex] at (s) {};
      \node[vertex] at (t) {};
      \node[vertex] at (u) {};
      \node[vertex] at (v) {};
      \node[vertex] at (w) {};
      \node[vertex] at (z) {};
      \node[vertex] at (aa) {};
      \node[vertex] at (ab) {};
  
      \node[vertexbb] at (0,0) {};
      \node[vertexbw] at (y) {};
      \node[vertexbw] at (ca) {};
      \node[vertexbw] at (cc) {};
      \node[vertexbb] at (cd) {};
      \node[vertexbb] at (ce) {};
      \node[vertexbw] at (cf) {};
      \node[vertexbb] at (cg) {};
      \node[vertexbw] at (ci) {};
      \node[vertexbb] at (ck) {};
    \end{tikzpicture}
    \captionsetup{width=.75\textwidth}
    \caption{Local view of a hypermap with its Walsh map
      superimposed \\{\footnotesize (vertices are red, hyperedges are green, faces are blue).}}
\label{f-LocalHypermap}
\end{figure}

About the same time this concept was generalized to higher  dimensions.
Namely, in the $1970$'s M.~Pezzana \cite{Pezzana1974aa,Pezzana1975aa}
discovered a way of coding a piecewise-linear (PL) manifold by a
properly edge-coloured graph. The idea goes as follows: choose a triangulation $K$ of this given
manifold $M$. Consider then its first barycentric subdivision $K_{1}$. The
$1$-skeleton of $K_{1}^{*}$ is a properly edge-colourable graph. It
turns out that the colouring of the graph is sufficient to reconstruct $M$
completely. 
The discovery of M.~Pezzana allows to bring combinatorial and graph
theoretical methods into PL topology. This correspondence between PL
manifolds and coloured graphs has been further developed by M.~Ferri,
C.~Gagliardi and their group \cite{Ferri1986aa}. It has also been independently
rediscovered by A.~Vince \cite{Vince1983aa},
S~Lins and A.~Mandel \cite{Lins1985aa}, and, to a certain extent,
by R. Gurau \cite{Gurau2011aa}.

Originally the partial duality relative to a subset of edges was
defined for ribbon graphs in \cite{Chmutov2007aa} under the name of
generalized duality. The motivation came from an idea to unify various
versions of the Thistlethwaite theorems in knot theory relating the
Jones polynomial of knots with the Tutte-like polynomial of (ribbon)
graphs. Then it was thoughtfully studied and developed in papers
\cite{Vignestourneret2008aa,Moffatt2008aa,Moffatt2009aa,Ellis-Monaghan2010aa,Bradford2012aa,Moffatt2012aa,Huggett2013aa,Gross2020aa}. We
refer to \cite{Ellis-Monaghan2013aa} for an excellent account on this development.

The main result of this paper is a generalization of partial duality to hypermaps in \cref{sec-pdual}. There we define the partial duality
in \cref{ssec-definition} and then describe it in each of the  three
combinatorial models in subsequent subsections. Independently this
generalization was found by Benjamin Smith \cite{Smith2018aa}, but it does not contain the expression of partial duality in terms of permutational models and does not have any formula for the genus change. The operation of partial duality usually is different from the operations of \cite{Jones2009aa,Jones1983aa} and from the operation of \cite{Vince1995aa}. Typically it changes the genus of a hypermap. We give a formula for the genus change in \cref{sec-genus}.
We finish the paper with general remarks about future directions of
research on partial duality in higher dimensions.

\paragraph{Acknowledgement}
\label{sec-acknowledgement}

We are grateful to Iain Moffatt and Neal Stoltzfus for fruitful discussions on our preliminary results during the Summer 2014 Programm ``Combinatorics, geometry, and physics'' in Vienna, and the Erwin Schr\"odinger International Institute for Mathematical Physics (ESI) and the University of Vienna for hospitality during the program. 
S.~Ch. thanks the Max-Plank-Institut f\"ur Mathematik in Bonn and the
Universit\'e Lyon 1 for excellent working conditions and warm
hospitalities during the visits in 2014 and 2016. F.~V.-T.\@ is
partially supported by the grant ANR JCJC CombPhysMat2Tens.

\section{Hypermaps}
\label{sec-hypermaps}

\subsection{Geometrical model}
\label{ssec-geom-model}

A {\it\bfseries map} is a cellularly embedded graph in a (not necessarily orientable) compact closed surface. The edges of a graph are represented by smooth arcs on the surface connecting two (not necessarily distinct) vertices. A small regular neighbourhood of such a graph on the surface is a surface with boundary, called 
{\it\bfseries ribbon graph}, equipped with a decomposition into a union of topological disks of two types, the neighbourhoods of vertices and the neighbourhoods of edges. The last one can be regarded as a narrow quadrilateral along the edges attached to the corresponding vertex discs at the two opposite sides. Attaching disks called {\it\bfseries faces} to the boundary components of a ribbon graph restores the original closed surface. Thus a map may be regarded as a cell decomposition of a compact closed surface into disks of three types, vertices, edges, and faces, such that the disks of the same type do not intersect and the disks of different types may intersect only on arcs of their boundaries and the edge-disks intersect with at most two vertex-disks and at most two face-disks.

{\it\bfseries Hypermaps} differ from maps in that the edges are allowed to be 
{\it\bfseries hyperedges} and may connect several vertices. Hypermaps may be considered as cellularly embedded hypergraphs where hyperedges are embedded as one dimensional star-shapes. 
Consequently hypermaps can be defined as cell decomposition of a compact closed surface into disks of three types, vertices, hyperedges, and faces, such that the disks of the same type do not intersect and the disks of different types may intersect only on arcs of their boundaries. The restriction on edge-disks to be quadrilaterals is released here comparable to the definition of a map.
So the definition of a hypermap is completely symmetrical with respect to the types of the cells.
\begin{figure}[!htp]
  \centering
  \begin{subfigure}[c]{.3\linewidth}
    \centering
    \raisebox{-.5\height}{\includegraphics[scale=.35]{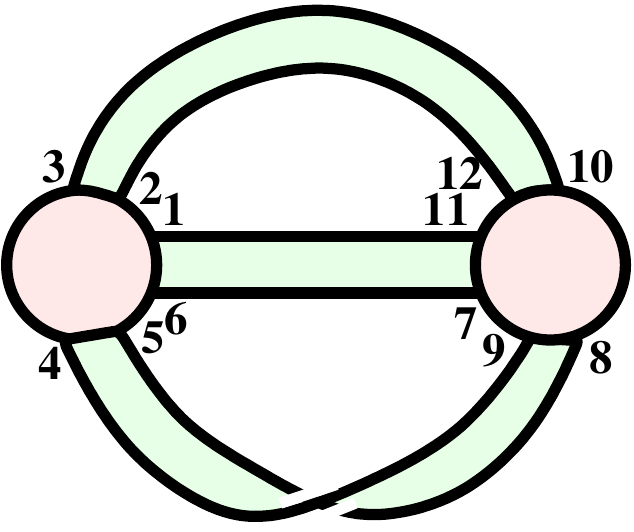}}
    \caption{A non orientable map $\mapm_0$ on a projective plane with
      two vertices and three edges.}
    \label{fig-NOMapEx-left}
  \end{subfigure}\qquad\qquad
  \begin{subfigure}[c]{.55\linewidth}
    \centering
    \raisebox{-.5\height}{\includegraphics[scale=.35]{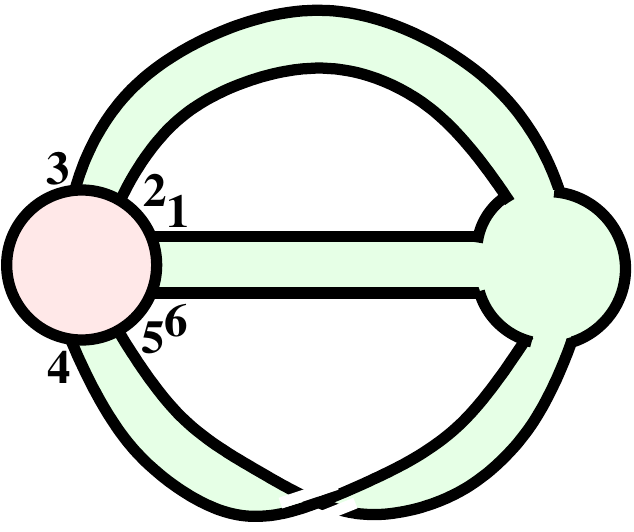}}\quad
    $=$\quad
    \raisebox{-.5\height}{\includegraphics[scale=.4]{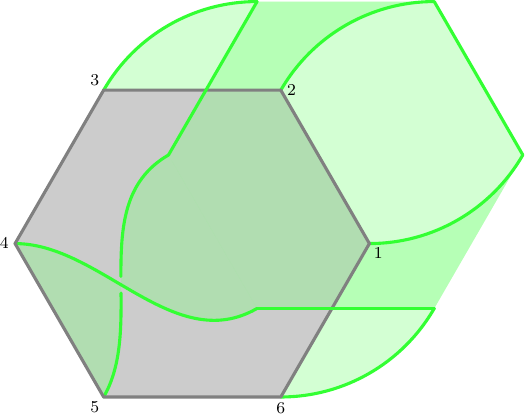}}
    \caption{A non orientable hypermap ${\mathfrak{hm}}_0$ on a
      projective plane with one vertex and one hyerpedge.}
    \label{fig-NOMapEx-right}
  \end{subfigure}
  \caption{Map and hypermap}
  \label{fig:NOMapEx}
\end{figure}
\Cref{fig:NOMapEx} shows a non orientable map $\mapm_{0}$ and a hypermap ${\mathfrak{hm}}_0$ obtained from $\mapm_{0}$ by uniting the right vertex with the three edges into a single hyperedge.  

\subsection{Permutational $\mathbf{\tau}$-model}
\label{ssec-tau-model}

In this model, also called {\it\bfseries bi-rotation system},
a hypermap $\map{hm}$ is described in a pure combinatorial way as three fixed point free involutions, $\tau_0$, $\tau_1$, and $\tau_2$,  acting on a set $X$ of 
{\it\bfseries local flags} of $\map{hm}$.
A (local) flag is a triple $(v,e,f)$ consisting of a vertex $v$, the intersection $e$ of a hyperedge incident to $v$ with a small neighbourhood of $v$, the intersection $f$ of a face adjacent to $v$ and $e$ with the same neighbourhood of $v$. Another way of defining a local flag is to consider a triangle in the barycentric subdivision of faces of $\map{hm}$ considered as an embedded hypergraph. We will depict a flag as a small black copy of such a triangle attached to the vertex $v$.
\begin{figure}[!htp]
  \centering
  \includegraphics[scale=.4]{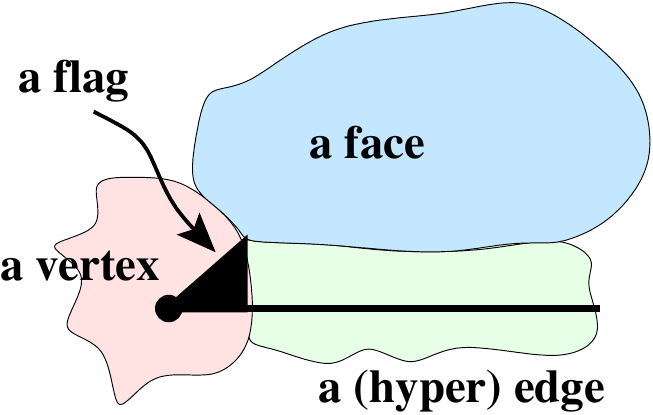}
  \caption{A local flag}
  \label{fig:flag}
\end{figure}

When a hypermap is understood as a [2]-coloured cell decomposition of a surface, the local flags correspond to the points where all three types of cells meet together.  Three lines of cell intersections emanate from each such point, the $2$-line of intersection of the vertex-disk with the (hyper) edge-disk, the $1$-line of intersection of the vertex-disk with the face-disk,  and the $0$-line of intersection of the edge-disk with the face-disk. These lines yield three partitions of the set $X$ of local flags into pairs of flags whose corresponding points are connected  by $0$-, $1$-, or $2$-lines. The permutation $\tau_i$ swaps the flags in the pairs connected by the $i$-lines. 
\begin{figure}[!htp]
  \centering
  \includegraphics[scale=.4]{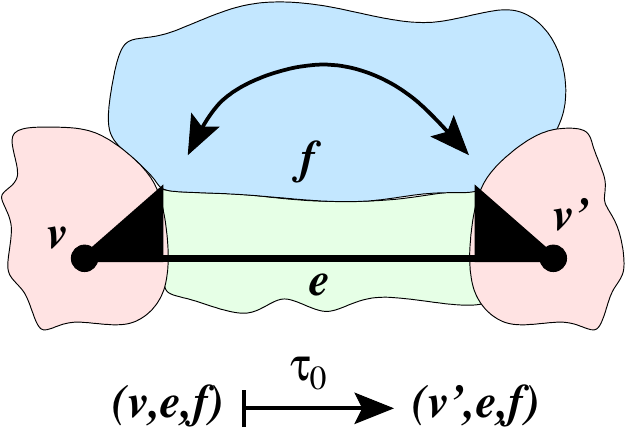}\qquad
	\includegraphics[scale=.4]{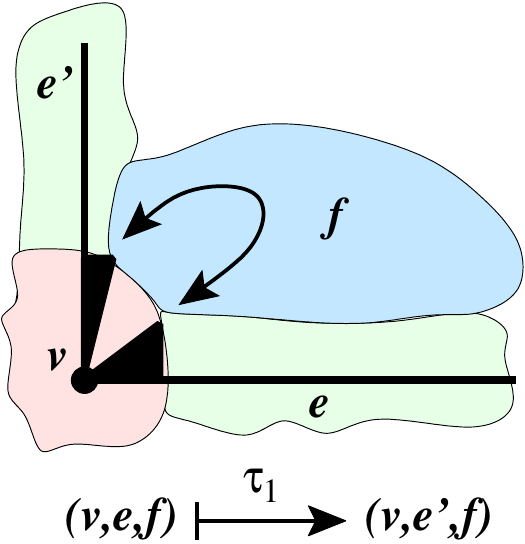}\qquad
	\includegraphics[scale=.4]{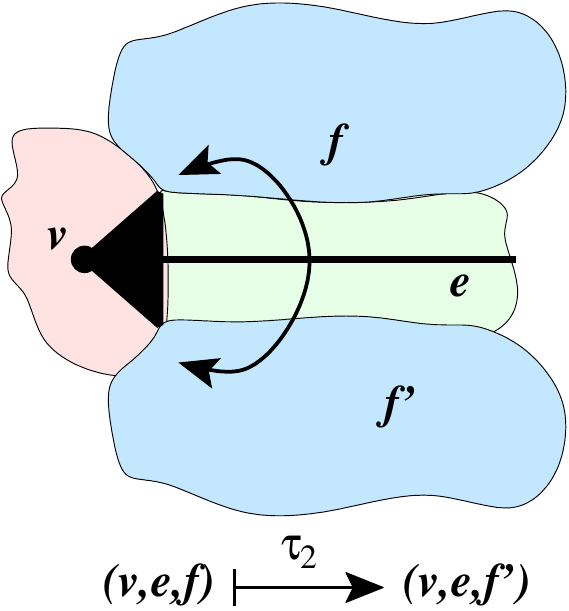}
  \caption{Involutions $\tau_0$, $\tau_1$, $\tau_2$}
  \label{fig:tau-0-1-2}
\end{figure}

In \cref{fig:NOMapEx} the local flags are labeled by
numbers. For these hypermaps the permutations $\tau_i$ are the
following. For the map $\mapm_{0}$,
\begin{align*}
  \tau_0&=(1,11)(2,12)(3,10)(4,9)(5,8)(6,7),\\
  \tau_1&=(1,2)(3,4)(5,6)(7,9)(8,10)(11,12),\\
  \tau_2&=(1,6)(2,3)(4,5)(7,11)(8,9)(10,12).
\end{align*}
For ${\mathfrak{hm}}_0$, 
\begin{align*}
  \tau_0&=(1,2)(3,5)(4,6),\quad\tau_1=(1,2)(3,4)(5,6),\quad\tau_2=(1,6)(2,3)(4,5).
\end{align*}

Any three fixed-point free involutions on a set $X$ yield a hypermap. Its vertices correspond to orbits of the subgroup $\langle\tau_1,\tau_2\rangle$ generated by $\tau_1$ and $\tau_2$, 
edges to the orbits of  $\langle\tau_0,\tau_2\rangle$, and faces to the orbits of 
$\langle\tau_0,\tau_1\rangle$. A hyperedge is a genuine edge if the corresponding orbit consists of four elements. Thus a hypermap is a map if and only if  $\tau_0\tau_2$ is also an involution.
\begin{rem}
W.~Tutte \cite{Tutte1984aa} introduced a less symmetrical description of combinatorial maps in terms of three permutations $\theta$, $\phi$, and $P$. They can be expressed in terms of 
$\tau_0$, $\tau_1$, and $\tau_2$ as follows:
\begin{equation*}
\theta=\tau_2,\quad \phi=\tau_0,\quad P=\tau_1\tau_2.
\end{equation*}
\end{rem}

\subsection{Permutational $\mathbf{\sigma}$-model}
\label{ssec-sigma-model}

This model, also known as {\it\bfseries rotation system}, gives a presentation of an  oriented hypermap in terms of three permutations 
$\sigma_V$, $\sigma_E$, and $\sigma_F$ of its {\it half-edges} $H$ satisfying the relation
$\sigma_F\sigma_E\sigma_V=1$. A {\it half-edge} is a small part of a hyperedge near a vertex incident to this hyperedge. We may think of half-edges as non complete local flags $(v,e)$ consisting of a vertex $v$ and the intersection $e$ of a hyperedge incident to $v$ with a small neighbourhood of  $v$. So a genuine edge has two half-edges, but a hyperedge may have more than two half-edges or even a single half-edge. 
If we think of a hyperedge as a one-dimensional star embedded to the surface, then
the rays of the star are the half-edges of the hyperedge. We place an empty square at the center of the star in order to distinguish it from a vertex.
 
The permutation $\sigma_V$ is a cyclic permutation of half-edges incident to a vertex according to the orientation of the hypermap. The permutation $\sigma_E$ acts as the cyclic permutation of rays in each star according to the orientation. For the permutation $\sigma_F$ we need to direct the half-edges with arrows pointing away from the vertices to which they are attached. These arrows point toward the centers of the stars of the hyperedges. The permutation $\sigma_F$ cyclically  permutes those half-edges in each face which are directed along the orientation of the face.
\begin{figure}[!htp]
  \centering
  \includegraphics[scale=.4]{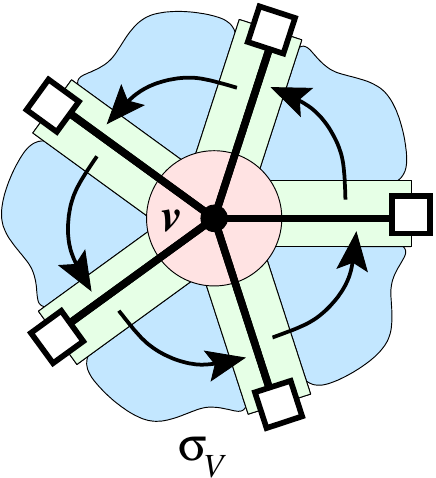}\qquad
  \includegraphics[scale=.4]{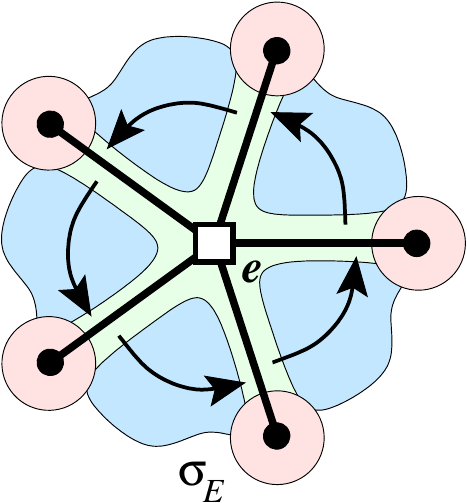}\qquad
  \includegraphics[scale=.4]{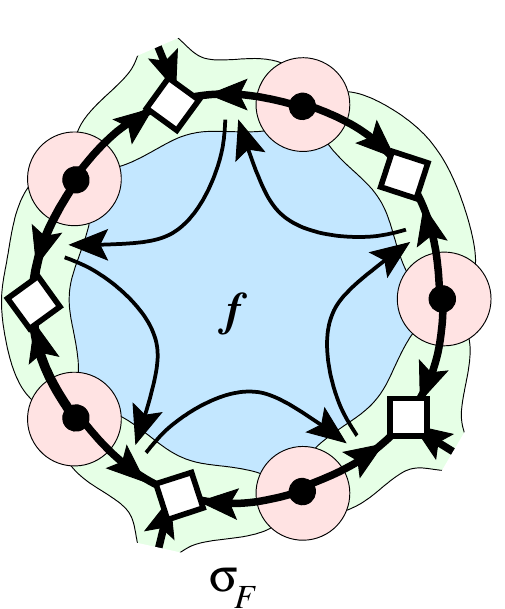}\qquad
  \raisebox{20pt}{\includegraphics[scale=.4]{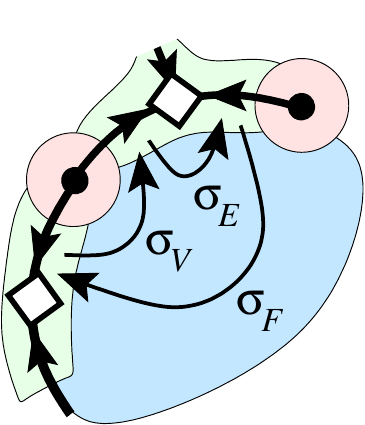}}
\caption{Permutations $\sigma_V$, $\sigma_E$, $\sigma_F$, and the identity $\sigma_F\sigma_E\sigma_V=1$}
  \label{fig:sigma_V_E_F}
\end{figure}
One can easily check that $\sigma_F\sigma_E\sigma_V=1$, see Figure \ref{fig:sigma_V_E_F}.
The cycles of $\sigma_V$ correspond to the vertices of the hypermap, the cycles of $\sigma_E$ correspond to the hyperedges, and the cycles of $\sigma_F$ correspond to the faces of the hypermap. Consequently any three permutations $\sigma_V$, $\sigma_E$, and $\sigma_F$ of a set $H$ satisfying the relation $\sigma_F\sigma_E\sigma_V=1$ uniquely determines an oriented hypermap.\\

Now let us describe the relation with the $\tau$-model of \cref{ssec-tau-model}. 
Each half-edge has two local flags in which it participates, if $x\in X$ is one of them, then $\tau_2(x)$ is the other one. Therefore the cardinality of $H$ is twice smaller that the cardinality of  $X$.

Suppose an oriented hypermap $\map{hm}$ is given by its $\sigma$-model on the set of half-edges $H=\{1,\dots, m\}$. We set $X$ to be a double of $H$, 
$X:=\{1_-,1_+,2_-,2_+,\dots,m_-,m_+\}$, and the involution $\tau_2$ to swap $i_-$ and $i_+$.
Define the permutation $\tau_0$ to be $\tau_0(i_-):=(\sigma_E(i))_+$ and 
$\tau_0(i_+):=(\sigma_E^{-1}(i))_-$. Finally, define $\tau_1$ as $\tau_1(i_-):=(\sigma_V^{-1}(i))_+$ and  $\tau_1(i_+):=(\sigma_V(i))_-$. Obviously they are involutions and the hypermap they define 
is $\map{hm}$.
\begin{figure}[!htp]
  \centering
  \includegraphics[scale=.6]{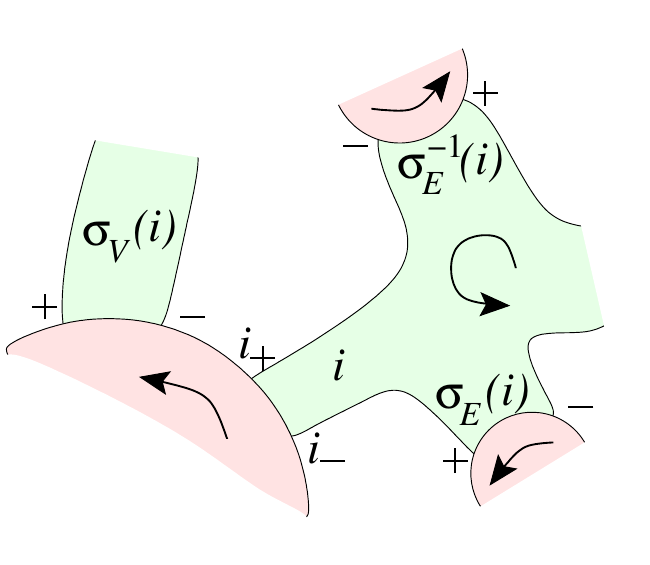}
\caption{$\sigma$ and $\tau$ permutations}
  \label{fig:sigma-flags}
\end{figure}

In the opposite way, suppose a hypermap $\map{hm}$ is given by its $\tau$-model on the set of local flags $X=\{1,\dots, n\}$. Also suppose that $\map{hm}$ is {\it connected}. That means the group generated by $\tau_0$, $\tau_1$,  and $\tau_2$ acts transitively on $X$.

For an orientable hypermap we can consistently arrange $X$ in pairs
with subscripts $+$ and $-$ as in Figure \ref{fig:sigma-flags}. For a non orientable hypermap such an arrangement is impossible. One may observe that $\tau$'s always change the subscript to the opposite one. This means that the subgroup $G$ of words of even length in $\tau$'s preserve the subscript. 
The group $G$ is generated by $\tau_2\tau_1$,  $\tau_0\tau_2$, and $\tau_1\tau_0$.
For a non orientable hypermap, the subgroup $G$ also acts transitively on $X$. In the orientable case,
$X$ splits into two orbits of $G$, one with the subscript $+$ and another one with the subscript $-$. Let $H$ be the one with subscript $+$. Then we set $\sigma_V$ (resp. 
$\sigma_E$ and $\sigma_F$) to be the restriction of $\tau_2\tau_1$  (resp. $\tau_0\tau_2$, 
and $\tau_1\tau_0$) on the orbit $H$. Obviously these restrictions satisfy the relation
$\sigma_F\sigma_E\sigma_V=(\tau_1\tau_0)(\tau_0\tau_2)(\tau_2\tau_1)=1$.
It is clear from \cref{fig:sigma-flags} that the $\sigma$-model constructed in this way gives the original orientable hypermap $\map{hm}$. The restriction to the ``$-$''-orbit gives the same hypermap with the opposite orientation.

\begin{example}\label{exa:tau-sigma}
For hypermaps on \cref{fig:NOMapEx} the subgroup $G$ is generated by the following permutations. For $\mapm_{0}$,
\begin{align*}
  \tau_2\tau_1&=(1,3,5)(2,6,4)(7,8,12)(9,11,10),\\
  \tau_0\tau_2&=(1,7)(2,10)(3,12)(4,8)(5,9)(6,11),\\ 
  \tau_1\tau_0&=(1,12)(2,11)(3,8,6,9)(4,7,5,10).
\end{align*}
For ${\mathfrak{hm}}_0$, $\tau_2\tau_1=(1,3,5)(2,6,4)$,
$\tau_0\tau_2=(1,4,3)(2,5,6)$, $\tau_1\tau_0=(1,2)(3,6)(4,510)$. In both cases the group $G$ acts transitively on flags. This is a combinatorial expression of the fact that these two hypermaps are non orientable.

On the contrary, consider the two oriented hypermaps of
\cref{fig:OMapEx}.
\begin{figure}[!htp]
  \centering
  \begin{subfigure}[c]{.4\linewidth}
    \centering
    \raisebox{-.5\height}{\includegraphics[scale=.35]{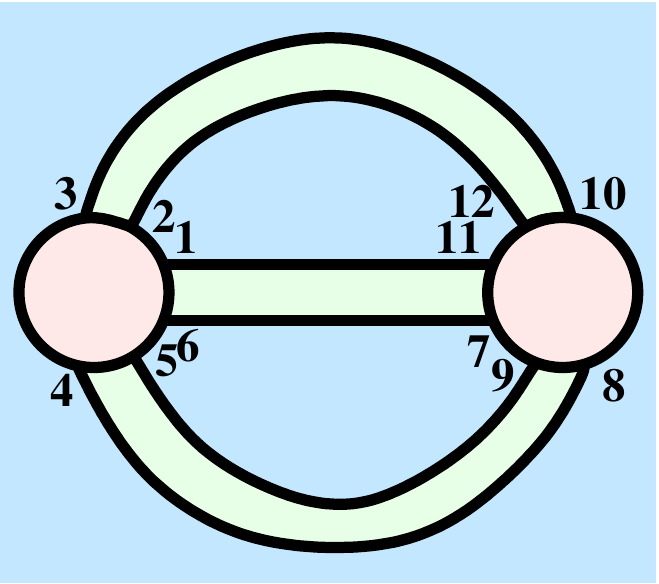}}
    \caption{An orientable map $\mapm_1$ on a sphere with two vertices and three edges}
    \label{fig-OMapEx-left}
  \end{subfigure}\qquad\qquad
  \begin{subfigure}[c]{.4\linewidth}
    \centering
    \raisebox{-.5\height}{\includegraphics[scale=.35]{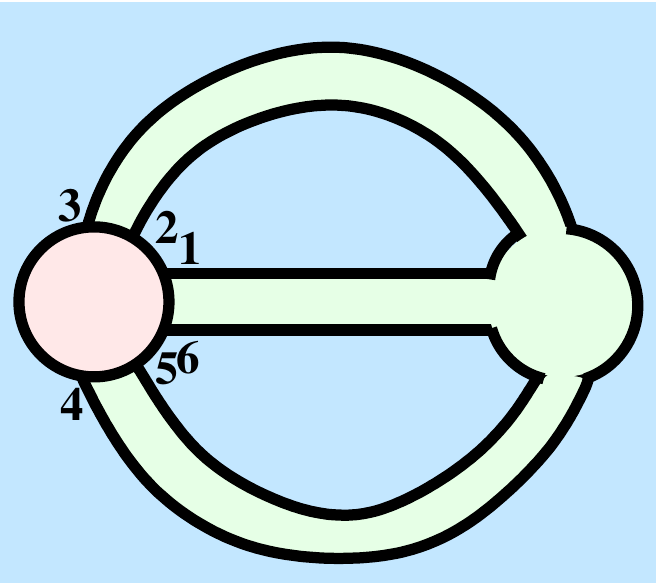}}
    \caption{An orientable hypermap ${\mathfrak{hm}}_1$ on a sphere
      with one vertex and one hyperedge}
    \label{fig-NOMapEx2-right}
  \end{subfigure}
  \caption{Map and hypermap. Second example.}
  \label{fig:OMapEx}
\end{figure}
The permutations $\tau_i$ are the following. For the map $\mapm_{1}$,
\begin{align*}
  \tau_0&=(1,11)(2,12)(3,10)(4,8)(5,9)(6,7),\\ 
  \tau_1&=(1,2)(3,4)(5,6)(7,9)(8,10)(11,12),\\
  \tau_2&=(1,6)(2,3)(4,5)(7,11)(8,9)(10,12).
\end{align*}
For ${\mathfrak{hm}}_1$, $\tau_0=(1,2)(3,4)(5,6)$,
$\tau_1=(1,2)(3,4)(5,6)$, $\tau_2=(1,6)(2,3)(4,5)$. The generators of the subgroup $G$ for $\mapm_1$ are:
\begin{align*}
  \tau_2\tau_1&=(1,3,5)(2,6,4)(7,8,12)(9,11,10),\\
  \tau_0\tau_2&=(1,7)(2,10)(3,12)(4,9)(5,8)(6,11),\\
  \tau_1\tau_0&=(1,12)(2,11)(3,8)(4,10)(5,7)(6,9).
\end{align*}
For ${\mathfrak{hm}}_1$: $\tau_2\tau_1=(1,3,5)(2,6,4)$,
$\tau_0\tau_2=(1,5,3)(2,4,6)$, $\tau_1\tau_0=1$. One can see that the group $G$ has two orbits on the set of flags. The ``$+$''-orbits are:
for $\mapm_1$, $H=\{1,3,5,7,8,12\}$; for ${\mathfrak{hm}}_1$, $H=\{1,3,5\}$. The restriction of the generators on this orbit gives the $\sigma$-models.\\
For $\mapm_1$: $\sigma_V=(1,3,5)(7,8,12)$, $\sigma_E=(1,7)(3,12)(5,8)$, 
$\sigma_F=(1,12)(3,8)(5,7)$.\\
For ${\mathfrak{hm}}_1$: $\sigma_V=(1,3,5)$, $\sigma_E=(1,5,3)$,  $\sigma_F=1$.
\end{example}

There is an elegant formula for the Euler characteristic of a hypermap in terms of its $\sigma$-model.
\begin{lemma}\label{lem-GenusCH}\cite[Proposition 1.5.3]{Lando2004aa}
Let $\mathfrak{hm}=(\sigma_V,\sigma_E,\sigma_F)$ be an 
oriented hypermap 
given by its $\sigma$ model on the set $H$ of $n$ half-edges, $n:=\#H$. 
Let $c_V$ (resp. $c_E$ and $c_F$) denote the number of cycles of $\sigma_V$ (resp. $\sigma_E$ and $\sigma_F$). Then the Euler characteristic
$\chi(\mathfrak{hm})$
of the surface of $\mathfrak{hm}$ is 
equal to
$$
    \chi(\mathfrak{hm})=c_V+c_E+c_F-n\ .
$$
\end{lemma}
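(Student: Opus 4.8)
The plan is to exhibit the underlying surface $S$ of $\mathfrak{hm}$ as a CW complex whose cells are read off from the $\sigma$-model, and then to apply Euler's formula $\chi(S)=\#\{0\text{-cells}\}-\#\{1\text{-cells}\}+\#\{2\text{-cells}\}$. The relevant CW structure is the one already sketched in the introduction: thicken the vertices and hyperedges to small disks, so that $S$ is decomposed into closed $2$-disks of three types --- vertex-disks, hyperedge-disks and face-disks --- that meet only along arcs of their boundaries. Declare the $2$-cells to be the interiors of these disks, the $1$-cells to be the arcs, and the $0$-cells to be the endpoints of the arcs, i.e.\ the points where all three types of disk come together, which are precisely the local flags of $\mathfrak{hm}$. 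With this in place the proof is just bookkeeping.

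First I would count the $2$-cells: by the correspondence recalled in Subsection~\ref{ssec-sigma-model}, the vertex-disks are in bijection with the cycles of $\sigma_V$, the hyperedge-disks with the cycles of $\sigma_E$, and the face-disks with the cycles of $\sigma_F$, so there are $c_V+c_E+c_F$ of them. Next, the $0$-cells are the local flags; since every half-edge lies in exactly two flags (if $x$ is one, $\tau_2(x)$ is the other), there are $2n$ of them. Finally, the $1$-skeleton is the trivalent Walsh graph: from every flag emanate exactly three arcs (the $0$-, $1$- and $2$-lines), so summing degrees gives $3\cdot 2n$ arc-ends and hence $\tfrac{1}{2}\cdot 3\cdot 2n=3n$ arcs. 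Plugging these into Euler's formula,
\[
\chi(S)=2n-3n+(c_V+c_E+c_F)=c_V+c_E+c_F-n,
\]
which is the asserted identity; writing $\chi(S)=2-2g$ when $\mathfrak{hm}$ is connected recovers the genus, and since all three counts and $\chi$ are additive over connected components there is no loss in assuming connectedness.

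The only genuinely non-formal point --- the step I would expect to be the main obstacle --- is verifying that the arcs-and-disks picture really defines a CW complex homeomorphic to $S$: that each disk interior is an open $2$-cell, that every disk boundary is a nonempty union of arcs meeting at flags (so the trivalent graph is well defined and the attaching maps make sense), and that distinct disks, or a disk with itself, meet only along boundary arcs. This is Walsh's observation \cite{Walsh1975aa}; in a purely combinatorial treatment it can either be checked directly from the defining relation $\sigma_F\sigma_E\sigma_V=1$ of the $\sigma$-model, or inherited from the equivalence between the $\sigma$-model and the $[2]$-colored graph model of Subsection~\ref{ssec-edge-colored-graphs}. A slicker but less self-contained alternative is to replace $\mathfrak{hm}$ on $S$ by its Walsh bipartite map --- black vertices the cycles of $\sigma_V$, white vertices the cycles of $\sigma_E$, edges the half-edges, faces the cycles of $\sigma_F$ --- and invoke the classical Euler relation $V-E+F=\chi$ for a cellularly embedded graph, which immediately yields $(c_V+c_E)-n+c_F$.
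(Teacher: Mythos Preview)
Your argument is correct and coincides with the paper's in all the essential counts: $2n$ flags (=vertices of the tessellation $\mathcal{T}$), $3n$ arcs (=edges of $\mathcal{T}$), and $c_V+c_E+c_F$ polygons. The only difference is cosmetic: the paper, rather than invoking Euler's formula for the CW complex $\mathcal{T}$ directly, first stellar-subdivides $\mathcal{T}$ into a genuine simplicial complex $\mathcal{S}$ (adding one new vertex per polygon, so $\mathsf{v}=2n+c_V+c_E+c_F$, $\mathsf{e}=3n+6n$, $\mathsf{f}=6n$) and then computes $\chi(\mathcal{S})$. This extra step is exactly the authors' way of handling the ``non-formal point'' you flag --- a polygon of $\mathcal{T}$ may meet itself along its boundary, so $\mathcal{T}$ need not be simplicial --- whereas you deal with it by appealing to the CW (or Walsh-map) description. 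Either route is fine; yours is slightly more direct, theirs slightly more self-contained.
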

\begin{proof}   Let $\mathcal T$ be the cell decomposition (tesselation) given by the hypermap $\mathfrak{hm}$. 
Note that
  \begin{itemize}
  \item $2n$ is the number of vertices of $\mathcal T$,
  \item the number of polygons in $\mathcal T$ is $c_V+c_E+c_F$,
  \item the number of edges of $\mathcal T$ is $3n$,
  \end{itemize}
Then the formula follows.
\end{proof}

\subsection{Edge Coloured Graphs}
\label{ssec-edge-coloured-graphs}

As indicated in \cref{ssec-tau-model} the boundaries of cells of a hypermap form a 3-regular graph embedded into the surface of the hypermap. It carries a natural edge colouring: the arcs of intersection of hyperedges and faces are coloured by 0, the arcs of intersection of vertices and faces are coloured by 1, and the arcs of intersection of vertices and hyperedges are coloured by 2. In this subsection we show that the entire hypermap can be reconstructed from this information.

\begin{defn} \label{def-colouredgraph}
Let $\kappa$ be a finite set. A $\kappa$-{\it\bfseries coloured graph} is
an abstract connected graph such that each edge carries a ``colour'' in $\kappa$ and each vertex is incident to exactly one edge of each colour.
\end{defn}

Note that a $\kappa$-coloured graph is necessarily
$\#\kappa$-regular and has no loops (but may contain multiple
edges). In the following, for all $I\subset\kappa$, we will denote $\kappa\setminus I$ by 
$\overline{I}$.

Let $\kappa=\{1,2,\dots,\#\kappa\}$. For a $\kappa$-coloured graph $\Gamma$ we can define a permutational $\tau$-model as a set of involutions $\tau_1,\tau_2,\dotsc,\tau_{\#\kappa}$
acting on the set $X$ of vertices of $\Gamma$ as follows.
$\tau_i$ interchange the vertices connected by an edge of colour $i$.
For the coloured graphs coming from hypermaps these permutations coincide with the $\tau$-model from \cref{ssec-tau-model}.

Each coloured graph $\Gamma$ contains some special coloured subgraphs
called {\it\bfseries bubbles} in \cite{Gurau2011aa} and {\it\bfseries residues} in
\cite{Vince1983aa}.
\begin{defn} \label{def-bubbles}
Let $\Gamma$ be a $\kappa$-coloured graph and $I\subset\kappa$. An
$\# I$-{\it\bfseries bubble} of colours $I$ in $\Gamma$ is a connected component of the $I$-coloured subgraph of $\Gamma$ induced by the edges of $\Gamma$ with colours in $I$. 
\end{defn}

In particular $0$-bubbles, corresponding to $I=\emptyset$, are the
vertices of $\Gamma$. The set of bubbles in $\Gamma$ of colours $I\subset\kappa$ is denoted by $\cB^I(\Gamma)$ or $\cB^I$ if there is no ambiguity. 
$B^I$ is its cardinality $\#\cB^I$. We also define $\cB_n(\Gamma)$,  $0\les n\les\#\kappa-1$, to be the set of all $n$-bubbles in $\Gamma$: $\cB_n:=\bigcup_{I\subset\kappa,\#I=n}\cB^I$ ; $B_n:=\#\cB_n$. Finally the whole set of bubbles of $\Gamma$, 
$\bigcup_{0\les n\les\#\kappa-1}\cB_n$, is written as $\cB(\Gamma)$.
The subgraph inclusion relation provides $\cB(\Gamma)$ with a poset structure.

\subsubsection{Topology of edge coloured graphs}
\label{sssec-topol-edge-colo}

To each coloured graph $\Gamma$, one can associate two cell complexes, $\mathbf{\Delta^{\!*}(\Gamma)}$ and its dual $\mathbf{\Delta(\Gamma)}$, as follows.

For each $D\in\N$, let $[D]$ be the set $\{0,1,\dotsc,D\}$.\\

{\bf The dual complex $\mathbf{\Delta^{\!*}(\Gamma)}$.}
\label{sec-dual-complex}
Let $\Gamma$ be a $[D]$-coloured graph. To each $D$-bubble
$b\in\cB^{\overline{i}}(\Gamma)$, one associates a $0$-simplex ${\mathfrak s}(b)$
coloured $i$. To each $(D-1)$-bubble $b\in\cB^{\overline{\{i,j\}}}$, one
associates an edge ${\mathfrak s}(b)$ the endpoints of which are respectively
coloured $i$ and $j$. In general, to each $k$-bubble
$b\in\cB^{\{i_1,\cdots,i_k\}}$, one associates an abstract $(D-k)$-simplex
${\mathfrak s}(b)$ coloured $[D]\setminus \{i_1,\cdots,i_k\}$. Now, the
poset structure of $\cB(\Gamma)$ provides gluing data for those
simplices. Indeed, let us consider two $(D-k)$-simplices 
${\mathfrak s}(b)$ and ${\mathfrak s}(b')$. If the corresponding $k$-bubbles $b$ and $b'$ are contained in a common $(k+1)$-bubble $b''$, identify ${\mathfrak s}(b)$ and ${\mathfrak s}(b')$ along their common facet ${\mathfrak s}(b'')$. This gluing respects the
colouring structure of the simplices. It can be shown that such a
complex is a trisp (for triangulated space) \cite{Kozlov2008aa}.
A.~Vince \cite[p.4]{Vince1983aa} called the topological space of this simplicial complex the {\it underlying topological space of the combinatorial map $\Gamma$}. \\

\noindent
But there is also another complex associated to $\Gamma$, dual to
$\Delta^{\!*}$.\\
{\bf The direct complex $\mathbf{\Delta(\Gamma)}$.}
\label{sec-direct-compl-delt}
It is constructed inductively, like a CW complex. To each $k$-bubble,
$0\les k\les D$, one will associate a $k$-dimensional topological
space. To each $0$-bubble $b$, i.e. to each vertex of $\Gamma$,
corresponds a point $|b|$. Each edge $e$ of $\Gamma$, i.e. each
$1$-bubble, contains two vertices $u$ and $v$. Define $|e|$ as the
cone over $|u|\cup|v|$. The realization $|e|$ of $e$ is thus a
segment. Now consider a $2$-bubble $b$. It is a bicoloured cycle in
$\Gamma$. $b$ contains a bunch of edges whose realization form a
circle. $|b|$ is defined as a cone over this circle hence a
$2$-disk. In general, let $b$ be $k$-bubble. It contains a set
$\cB_{k-1}(b)$ of $(k-1)$-bubbles. The realization of any
$b'\in\cB_{k-1}(b)$ has been defined at the previous induction
step. The realizations $|b_{1}|$ and $|b_{2}|$ for $b_{1},
b_{2}\in\cB_{k-1}$ are identified along $|b_{1}\cap b_{2}|$ (which is
a union of lower dimensional bubbles). Then the
whole set $\cB_{k-1}(b)$ has a (connected) realization that we denote
$\partial|b|$. Finally $|b|$ is defined as the cone over $\partial|b|$
(hence the name).

The realization $|\Gamma|$ of $\Gamma$ corresponds to the gluing of
the $D$-cells of $\Delta(\Gamma)$. $\Delta(\Gamma)$ is a complex whose cells are topological
spaces glued along their common boundaries. But in general, its cells
are not homeomorphic to balls. And indeed $|\Gamma|$ is generally not
a manifold but a normal pseudo-manifold \cite{Gurau2012ab}.

\subsubsection{Hypermaps as edge-coloured graphs}
\label{sssec-hyperm-edge-colo}

It was mentioned at the beginning of this subsection that a hypermap $\mathfrak{hm}$ determines a $[2]$-coloured graph $\Gamma_{\mathfrak{hm}}$. Its vertices corresponds to (local) flags 
of $\mathfrak{hm}$ and its edges of colour $i$ correspond to the orbits of the involution $\tau_i$. 

Here is an inverse construction.
Let us consider a $[2]$-coloured graph $\Gamma$. The $2$-cells of its
direct complex $\Delta(\Gamma)$ are polygons and $|\Gamma|$ is thus
the result of the gluing of polygons along common boundaries. Whereas
not true in general, the gluing of polygons, \emph{dictated by a coloured
graph}, is always a manifold and thus a closed compact (not necessarily
orientable) surface. Moreover those polygons are of three types: they
are bounded by either $01$-, $02$- or $12$-cycles ($2$-bubbles). Said
differently, $\Delta(\Gamma)$ is, in dimension $2$, a polygonal
tessellation of a closed compact (not necessarily
orientable) surface with polygons of three different types, i.e. a hypermap $\mathfrak{hm}$. Thus $[2]$-coloured graphs provide another description of hypermaps.

\begin{example}\label{exa:colgr}\rm
Here are the examples of $[2]$-coloured graph for hypermaps ${\mathfrak{hm}}_0$ from Figure \ref{fig:NOMapEx} and ${\mathfrak{hm}}_1$ from Figure \ref{fig:OMapEx}.
\begin{figure}[!htp]
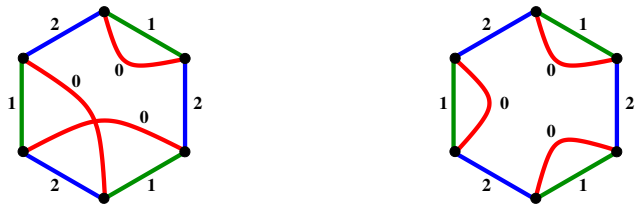

  \centering
	\risSpdf{-10}{maps-1-c2u}{}{80}{60}{0}\ \risSpdf{20}{totorl}{}{20}{0}{0}\ 
	\risSpdf{-10}{cgr-c2}{}{70}{0}{0}\hspace{1cm}
	\risSpdf{-10}{maps-1-c4u}{}{80}{0}{0}\ \risSpdf{20}{totorl}{}{20}{0}{0}\ 
 	\risSpdf{-10}{cgr-c4}{}{70}{0}{0}\hspace{3cm}
 \caption{$[2]$-coloured graphs $\Gamma_{{\mathfrak{hm}}_0}$ and $\Gamma_{{\mathfrak{hm}}_1}$} 
  \label{fig:colgr}
\end{figure}
A reader may enjoy constructing the direct complexes 
$\Delta(\Gamma_{{\mathfrak{hm}}_0})$ and
$\Delta(\Gamma_{{\mathfrak{hm}}_1})$, and checking that they are indeed  isomorphic to the hypermaps from Figures \ref{fig:NOMapEx} and
\ref{fig:OMapEx}.
\end{example}

\begin{lemma}\label{lemma:or-cgr}
A hypermap corresponding to a $[2]$-coloured graph $\Gamma$ is orientable if and only if $\Gamma$ is bipartite.
\end{lemma}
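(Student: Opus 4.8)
The plan is to translate the statement into the $\tau$-model and use the characterization of orientability already developed in Subsection~\ref{ssec-sigma-model}: an connected hypermap is orientable if and only if the subgroup $G\les\langle\tau_0,\tau_1,\tau_2\rangle$ of even-length words does \emph{not} act transitively on the flag set $X$, in which case $X$ splits into two $G$-orbits. So I would first recall that the vertices of $\Gamma=\Gamma_{\mathfrak{hm}}$ are exactly the flags in $X$, and that an edge of color $i$ joins $x$ and $\tau_i(x)$. Walking along an edge of $\Gamma$ therefore applies one of the involutions $\tau_i$; walking along a path of length $\ell$ applies a word of length $\ell$ in the $\tau_i$. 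Hence a closed walk in $\Gamma$ corresponds to a word $w$ in the $\tau_i$ with $w(x)=x$, and its length is the parity of that word.

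Next I would make the bridge to bipartiteness explicit. A connected graph is bipartite if and only if every closed walk has even length, equivalently if and only if there is a proper $2$-coloring of its vertices; since $\Gamma$ is connected, fixing the color of one vertex $x_0$ determines the color of every other vertex $x$ by the parity of the length of any walk from $x_0$ to $x$, and this is well-defined precisely when all closed walks are even. Combining the two translations: $\Gamma$ is bipartite $\iff$ no odd-length word in the $\tau_i$ fixes any flag $\iff$ the orbit of $x_0$ under the full group $\langle\tau_0,\tau_1,\tau_2\rangle$ (which is all of $X$ by connectedness) is split by parity into the $G$-orbit of $x_0$ and its complement $\iff$ $G$ is non-transitive on $X$. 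By the discussion preceding Example~\ref{exa:tau-sigma}, this last condition is exactly orientability of $\mathfrak{hm}$, and the two color classes of $\Gamma$ are precisely the two $G$-orbits carrying the ``$+$'' and ``$-$'' subscripts.

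The one point that needs a little care — and which I expect to be the main obstacle to making the argument airtight — is the equivalence ``some odd closed walk exists $\iff$ some odd word fixes \emph{some} flag'' versus ``$\iff$ the even subgroup $G$ is transitive.'' The subtlety is that an odd word fixing one flag need not have anything to do with a flag's orbit \emph{a priori}; but since $\langle\tau_0,\tau_1,\tau_2\rangle$ acts transitively on $X$, the full orbit of any $x_0$ is all of $X$, and the orbit decomposes under $G$ into at most two pieces according to word parity — a clean piece of elementary group theory (the index-two-or-one subgroup argument), which is exactly what is already invoked in the paragraph defining $G$. So the real content is just assembling: (graph bipartite) $=$ (walk parities consistent) $=$ (word parities consistent on the orbit) $=$ ($G$ strictly smaller orbit than $\langle\tau_i\rangle$) $=$ (orientable). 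I would write this as a short chain of iff's, citing Definition~\ref{def-coloredgraph} for connectedness of $\Gamma$ and the $\sigma$–$\tau$ correspondence of Subsection~\ref{ssec-sigma-model} for the orientability criterion, and finish by remarking that the bipartition classes are the two orbits $H$ and its complement, so the two colorings of $\Gamma$ correspond to the two orientations of $\mathfrak{hm}$.
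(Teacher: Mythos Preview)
Your proposal is correct and follows essentially the same approach as the paper: both identify bipartiteness of $\Gamma$ with the splitting of the flag set $X$ into the two $G$-orbits (the ``$+$'' and ``$-$'' parts) described in Subsection~\ref{ssec-sigma-model}, which is precisely the orientability criterion established there. The paper's proof is a one-line reference to that subsection, whereas you unpack the group-theoretic details (closed walks $\leftrightarrow$ words, parity $\leftrightarrow$ membership in $G$, and the index-two argument), but the underlying idea is identical.
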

\begin{proof} According to \cref{ssec-sigma-model} a hypermap is orientable if and only if the vertices of $\Gamma$ can be split into two parts with subscripts $+$ and $-$ as on \cref{fig:sigma-flags}. 
\end{proof}

\begin{rem}
In \cite{Vince1983aa} A.~Vince proposed a way to associate a $[d]$-coloured graph $\Gamma$ to any cell decomposition $K$ of a closed $d$-manifold. $\Gamma$ is defined as the $1$-skeleton of the complex dual to the first barycentric subdivision of $K$. Whereas his method works for any cell complex associated to closed manifolds, it does not define a one-to-one correspondence between hypermaps and $[2]$-coloured graphs (not all coloured graphs have a dual complex which is the barycentric subdivision of another cell complex). Moreover the coloured graph thus associated to $K$ is of higher order than ours.
\end{rem}

\section{Partial duality}
\label{sec-pdual}

\subsection{Definition}
\label{ssec-definition}

Assume that a hypermap $\mathfrak{hm}$ is connected. Otherwise we will need to do partial duality for each connected component separately and then take the disjoint union.
Let $S$ be a subset of cells of $\mathfrak{hm}$ of the same type, either vertex-cells, or hyperedge-cells, or face-cells. We will define the {\it partial dual hypermap $\mathfrak{hm}^S$} relative to $S$. If $S$ is the set of all cells of the given type, the partial duality relative to $S$ is the total duality which swaps the two types of the remaining cells without changing the cells themselves and reverses the orientation in an oriented case. 

For example, if $\mathfrak{hm}$ is a graph cellularly embedded into a
surface then the total duality relative to the whole set of edges is
the classical duality of graphs on surfaces which interchanges
vertices and faces. Since the concept of hypermap is completely
symmetrical we can make the total duality relative to the set of
vertices for example. Then the edges and faces will be
interchanged. The hypermap ${\mathfrak{hm}}_1$ from \cref{fig:OMapEx}
has one vertex, one hyperedge and 3 faces. So we have three total
duals relative to the vertex, relative to the hyperedge, and relative
to all three faces, which differ only by the colour (type) of the
corresponding cells. On \cref{fig:tduals} the three duals are shown as cell decomposed spheres together with the corresponding embeddings of the hypergraphs; the hyperedges are embedded as one-dimensional stars centered at little squares.
\begin{figure}[!htp]
  \centering\thicklines
	\risSpdf{-10}{maps-1-c4u}{\put(0,-5){\vector(-3,-2){50}}
	    \put(45,-5){\vector(0,-1){30}}\put(90,-5){\vector(3,-2){50}}}{90}{70}{50}\\
  \includegraphics[scale=.3]{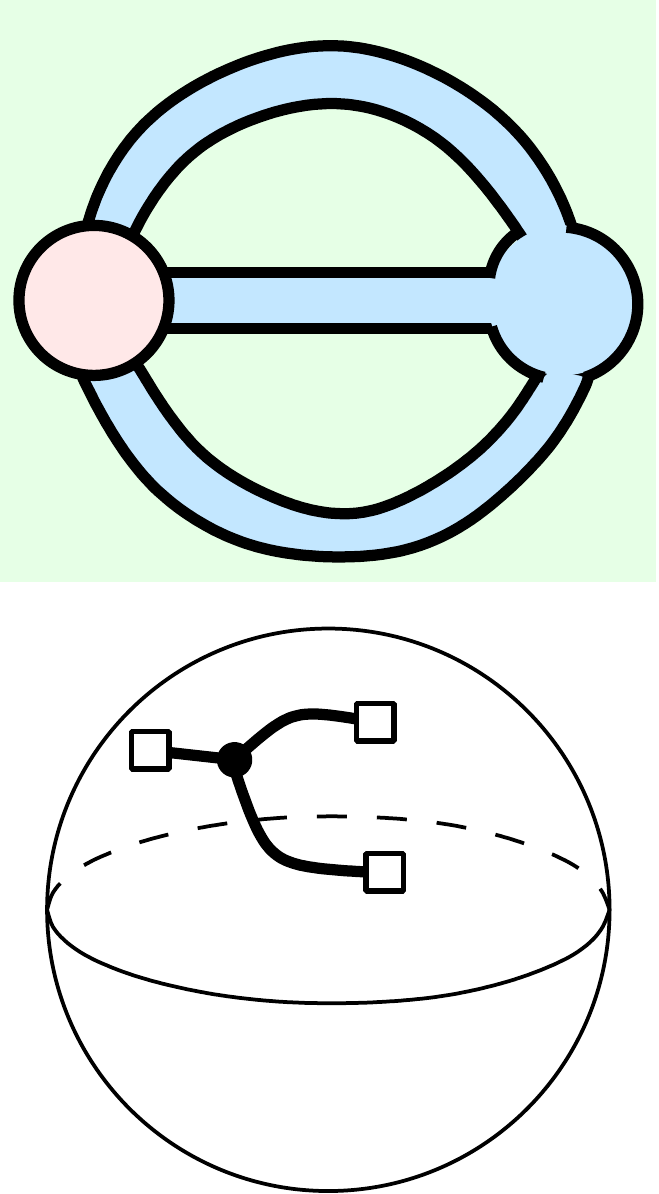}\qquad
  \includegraphics[scale=.3]{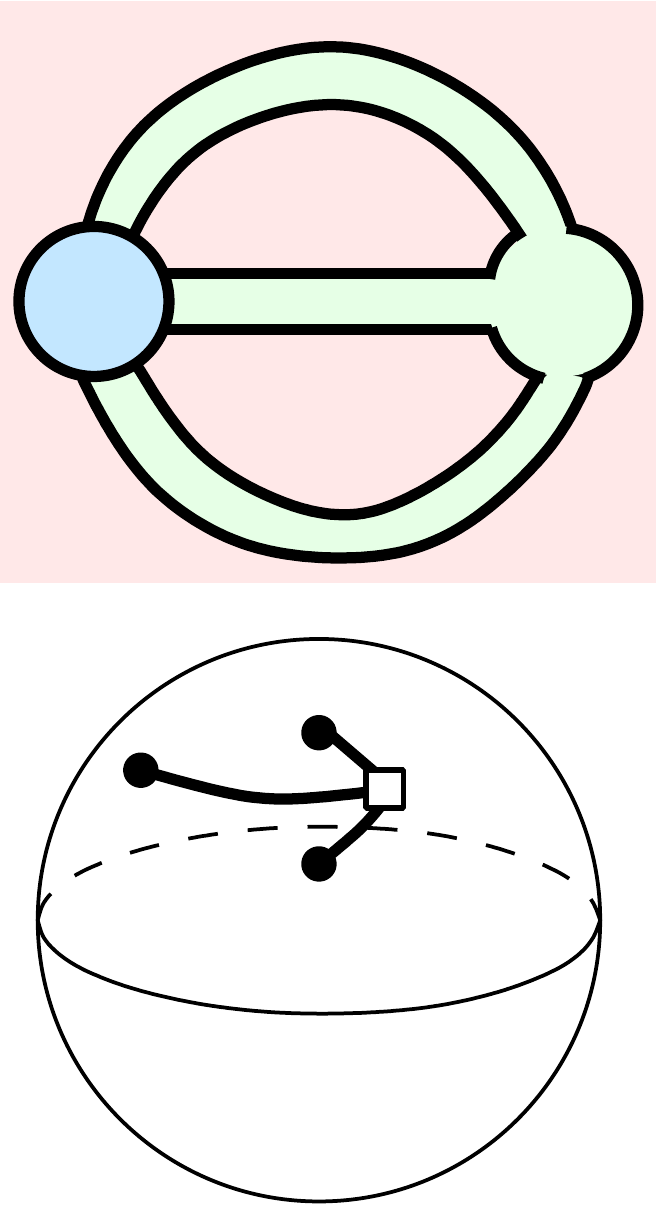}\qquad
  \includegraphics[scale=.3]{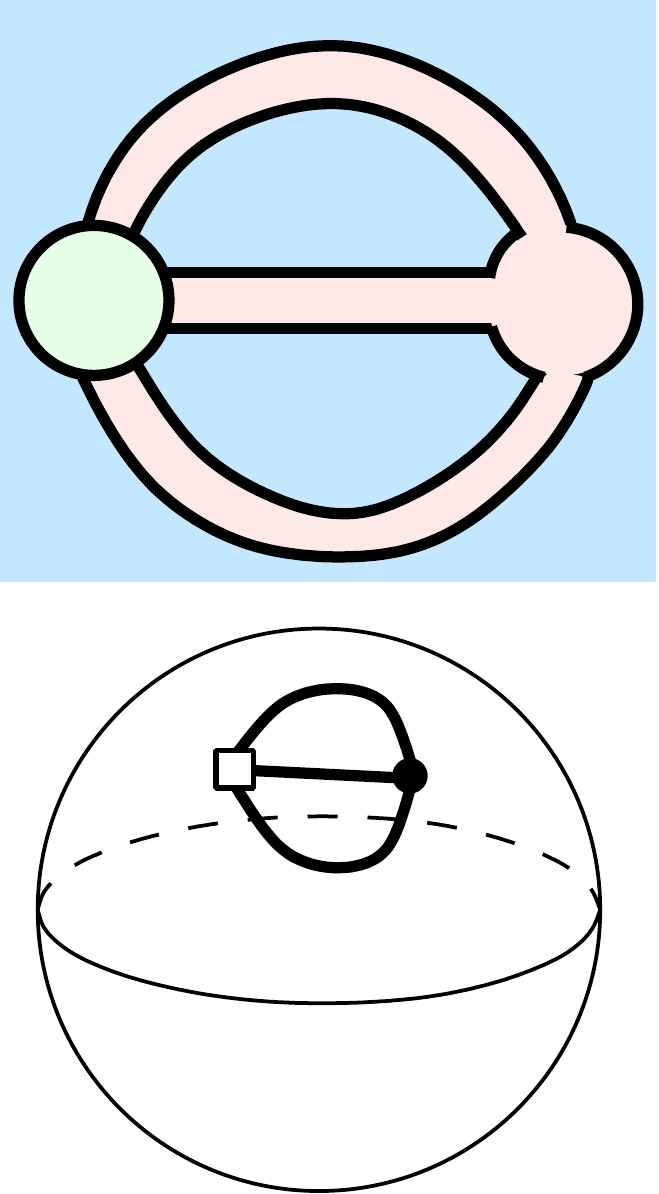}
  \caption{Total duals of the hypermap ${\mathfrak{hm}}_1$ from \cref{fig:OMapEx}}
  \label{fig:tduals}
\end{figure}
The left picture represents ${\mathfrak{hm}}_1^{\{v\}}$ and has 3 hyperedges with a single half-edge each. The middle picture represents ${\mathfrak{hm}}_1^{\{e\}}$ with a single hyperedge of valency 3 adjacent to 3 distinct vertices and a single face. The right picture ${\mathfrak{hm}}_1^{\{f_1,f_2.f_3\}}$ is isomorphic to the original hypermap ${\mathfrak{hm}}_1$.

\begin{defn}\label{def:pdual}
Without loss of generality we may assume that $S$ is a subset of the
set of vertex-cells.  Choose a different type of cells, say hyperedges. Later in 
\cref{lemma:ind-type} we show that the resulting hypermap does not depend on this choice; we could choose faces instead of hyperegdes if we want to.

{\bf Step 1.} Consider the boundary $\partial F$ of the surface $F$ which is the union of  the cells from $S$ and all cells of the chosen type, hyperedges in our case.

{\bf Step 2.} Glue a disk to each connected component of 
$\partial F$. These will be the {\it\bfseries hyperedge-cells} for 
$\mathfrak{hm}^S$. Not that we do not include the interior of $F$ into the hyperedges. Although if $\partial F$ has only one component, gluing a disk to it results in the surface $F$ itself, and then we may consider $F$ as the single hyperedge of $\mathfrak{hm}^S$. See \cref{fig:pd-st-1-2}.

{\bf Step 3.} Take a copy of every vertex. These disks will be the
{\it\bfseries vertex-cells} for  $\mathfrak{hm}^S$. Their attachment to the
hyperedges is as follows. Every vertex disk of the original hypermap $\mathfrak{hm}$ contributes one or several intervals to $\partial F$. Indeed, if a vertex belong to $S$, then it contributes to $F$ itself and
a part of its boundary contributes to $\partial F$. If a vertex is not
in $S$, then it has some hyperededges attached to it because
$\mathfrak{hm}$ is assumed to be connected. So such a vertex-disk has
a common boundary intervals with $F$ and therefore contributes these
intervals to $\partial F$. The new copies of the vertex-disks, as
vertices of $\mathfrak{hm}^S$, are attached to hyperedges exactly
along the same intervals as the old ones. See \cref{fig:pd-st-3}.

{\bf Step 4.} At the previous steps we constructed the vertex and
hyperedge cells for the partial dual $\mathfrak{hm}^S$. Their union
forms a surface with boundary. Glue a disk to each of its boundary
components. These are going to be the {\it\bfseries faces} of $\mathfrak{hm}^S$.
See \cref{fig:pd-st-4}.

This finishes the construction of the partial dual hypermap $\mathfrak{hm}^S$.
\end{defn}

\begin{example}\label{exa:pdual}\rm
We examplify the construction of the partial dual $\mapm_1^{\{v\}}$
for the map $\mapm_1$ from Figure \ref{fig:OMapEx} relative to its left vertex $v$.
\begin{figure}[!htp]
  \centering
  \rb{32pt}{$\mapm_1\ =\ $}\includegraphics[scale=.3]{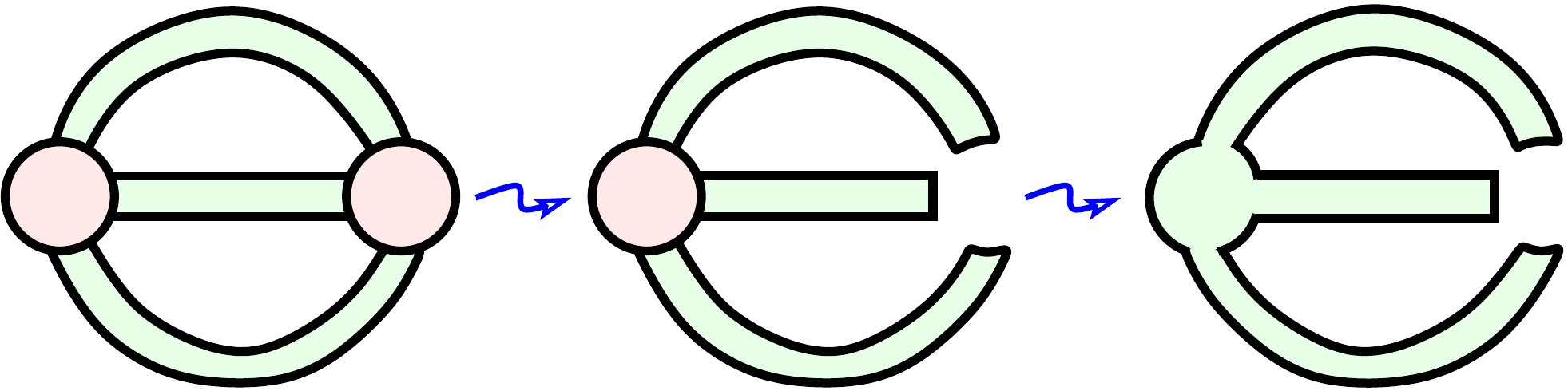}
  \caption{Steps 1 \& 2: forming hyperedges of $\mapm_1^{\{v\}}$}
  \label{fig:pd-st-1-2}
\end{figure}
\begin{figure}[!htp]
  \centering
  \includegraphics[scale=.3]{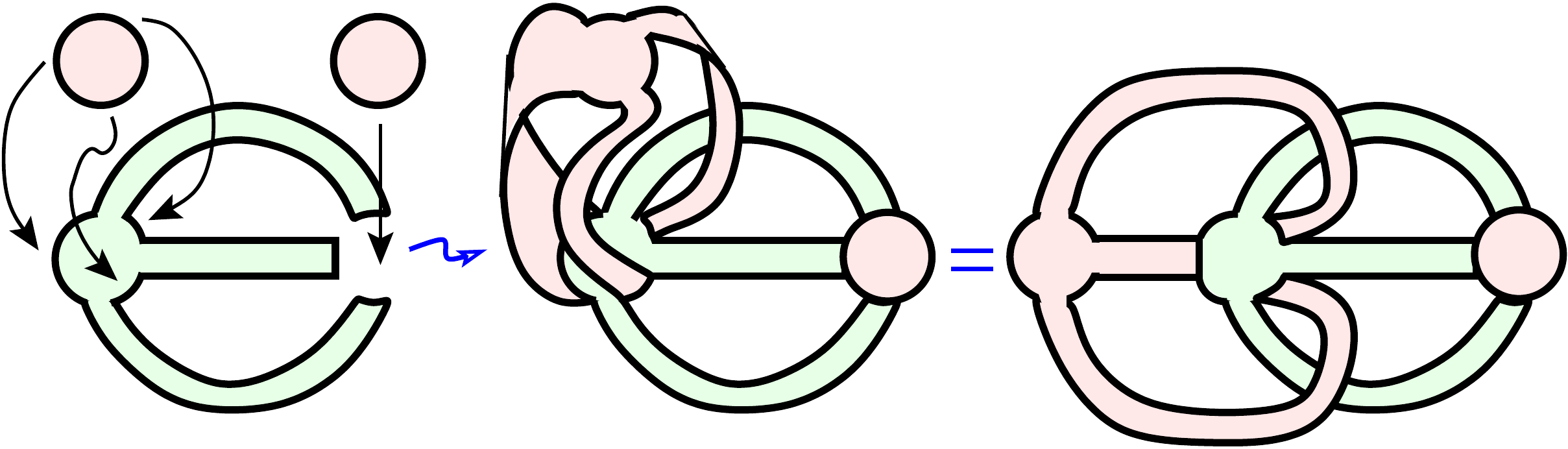}
  \caption{Step 3: copying vertices and gluing them to hyperedges}
  \label{fig:pd-st-3}
\end{figure}
\begin{figure}[!htp]
  \centering
  \includegraphics[scale=.3]{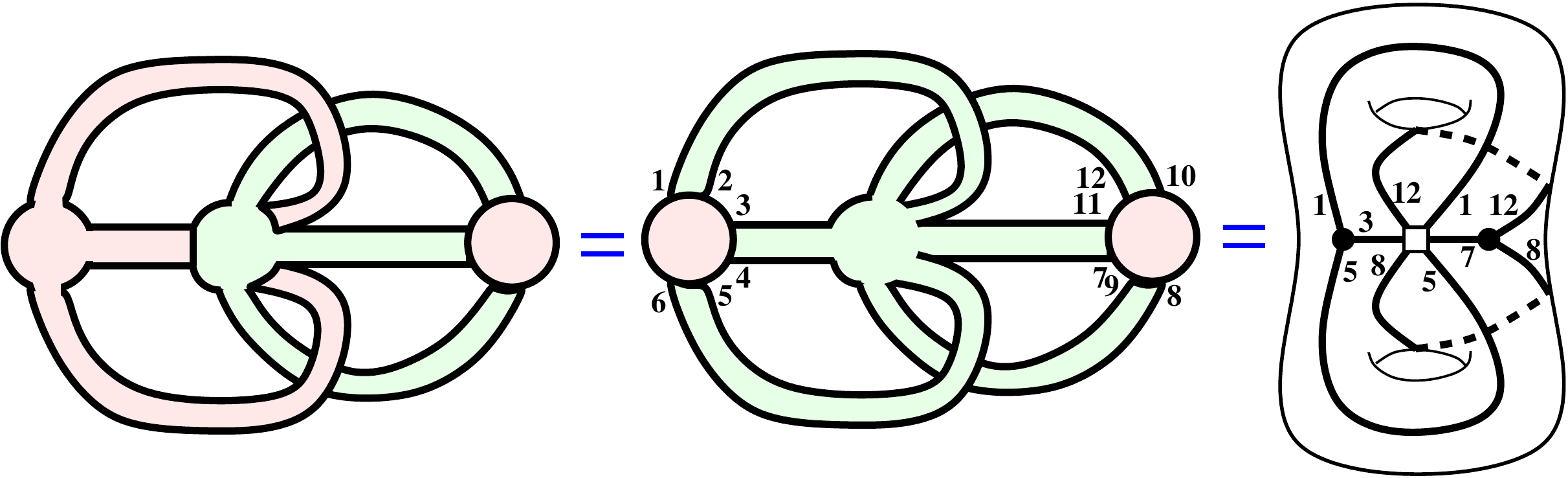}
  \caption{Step 4: gluing faces and the resulting hypermap $\mapm_1^{\{v\}}$.}
  \label{fig:pd-st-4}
\end{figure}
\end{example}

Similarly one may find the partial dual $\mapm_0^{\{v\}}$ for the non orientable map $\mapm_0$ from \cref{fig:NOMapEx}. The resulting surface after step 3 will be similar to the one above, only one half-edge will be twisted. It still has one boundary component, and therefore a single face. So its Euler characteristic is still $-2$, only now the resulting hypermap will be non orientable. It represents a surface homeomorphic to a connected sum of 4 copies of the projective plane.

\begin{lemma}\label{lemma:ind-type}
The resulting hypermap does not depend on the choice of type at the beginning of \cref{def:pdual}.
\end{lemma}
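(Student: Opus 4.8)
The plan is to pass to the colored--graph model of Subsection~\ref{ssec-edge-colored-graphs}. Since a hypermap is completely recovered from its $[2]$-colored graph, it suffices to show that the two executions of Definition~\ref{def:pdual} --- one taking hyperedges as the auxiliary type, one taking faces --- produce the same colored graph. Write $X$ for the flag set of $\mathfrak{hm}$, with fixed-point-free involutions $\tau_0,\tau_1,\tau_2$ and colored graph $\Gamma$ (vertex set $X$, the color-$i$ edges being the orbits of $\tau_i$), and recall from Subsection~\ref{ssec-tau-model} that the vertices, hyperedges, faces of $\mathfrak{hm}$ are the $\langle\tau_1,\tau_2\rangle$-, $\langle\tau_0,\tau_2\rangle$-, $\langle\tau_0,\tau_1\rangle$-orbits, i.e.\ the $\{1,2\}$-, $\{0,2\}$-, $\{0,1\}$-bubbles of $\Gamma$. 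Split $X=X_S\sqcup X_{\bar S}$ according to whether the vertex of a flag belongs to $S$ or not. The one structural fact the proof rests on is that $\tau_1$ and $\tau_2$ both fix the vertex of a flag, so $X_S$ and $X_{\bar S}$ are each invariant under $\langle\tau_1,\tau_2\rangle$. Consequently the maps $\widehat\tau_1,\widehat\tau_2$ determined by $\widehat\tau_1|_{X_{\bar S}}=\tau_1$, $\widehat\tau_1|_{X_S}=\tau_2$ and $\widehat\tau_2|_{X_{\bar S}}=\tau_2$, $\widehat\tau_2|_{X_S}=\tau_1$ are again fixed-point-free involutions of $X$, and $\Gamma^S:=(X;\tau_0,\widehat\tau_1,\widehat\tau_2)$ is a connected $[2]$-colored graph: it has the same underlying graph as $\Gamma$, and exactly one edge of each color at each vertex because $\{\widehat\tau_1,\widehat\tau_2\}=\{\tau_1,\tau_2\}$ on each of $X_S,X_{\bar S}$.

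The key step is the translation of Steps~1--2 of Definition~\ref{def:pdual}. With hyperedges chosen, $F$ is the union of the vertex-disks over $S$ together with all hyperedge-disks, and an arc of $\Gamma$ lies on $\partial F$ exactly when it separates a cell contained in $F$ from a cell not contained in $F$. Checking the three colors --- a color-$0$ arc always lies on $\partial F$; a color-$1$ arc does iff its vertex lies in $S$; a color-$2$ arc does iff its vertex does not --- identifies $\partial F$ with the subgraph of $\Gamma$ spanned by all color-$0$ edges together with the color-$1$ edges at $X_S$ and the color-$2$ edges at $X_{\bar S}$. This subgraph is $2$-regular, hence a disjoint union of circles, and its components are precisely the orbits of $\langle\tau_0,\widehat\tau_2\rangle$; capping them gives the hyperedge-cells of $\mathfrak{hm}^S$. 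The same check with faces chosen replaces $\partial F$ by the subgraph spanned by all color-$0$ edges, the color-$1$ edges at $X_{\bar S}$ and the color-$2$ edges at $X_S$, whose components are the orbits of $\langle\tau_0,\widehat\tau_1\rangle$.

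To conclude, Steps~3--4 are read off the same way. The new vertex-disks of Step~3 are copies of the old ones, reglued along exactly their own arcs of $\partial F$, and the boundary left over after Step~3 is capped by the faces in Step~4; recording which pair of cell types each arc now separates (color $0$: new face / new hyperedge; color $1$: new vertex / new face; color $2$: new vertex / new hyperedge) shows in both executions that the colored graph of the output is $\Gamma^S$. Indeed, in the hyperedge execution the capped circles of $\partial F$ are the $\{0,2\}$-bubbles of $\Gamma^S$ (the hyperedges) and the Step-$4$ disks are its $\{0,1\}$-bubbles (the faces), while in the face execution the capped circles of $\partial F$ are the $\{0,1\}$-bubbles of $\Gamma^S$ (the faces) and the Step-$4$ disks are its $\{0,2\}$-bubbles (the hyperedges); either way the output has colored graph $\Gamma^S$, so the two hypermaps coincide. (If $\mathfrak{hm}$ is oriented, $\Gamma$ is bipartite by Lemma~\ref{lemma:or-cgr}, hence so is $\Gamma^S$ and $\mathfrak{hm}^S$ is oriented; both executions use the ambient orientation in the same way, so they agree as oriented hypermaps.)

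The main obstacle is the bookkeeping concealed in the middle two steps: one must verify that $\partial F$ is genuinely a subgraph of $\Gamma$ --- which it is, since the three types of cells of $\mathfrak{hm}$ meet precisely along the edges of $\Gamma$, not along a finer subdivision --- and that Steps~3 and~4 reglue the vertex-disks and attach the new faces along exactly the arcs predicted by the color count; one should also note that the degenerate case of Step~2, in which $\partial F$ is connected and $F$ itself may informally be regarded as the single hyperedge, leaves the resulting abstract hypermap unchanged. Once each execution's output has been identified with $\Gamma^S$, the lemma follows; in passing this also shows that partial duality relative to $S$ is realised on colored graphs by the local recolouring $\Gamma\rightsquigarrow\Gamma^S$.
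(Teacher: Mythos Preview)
Your proof is correct and takes a genuinely different route from the paper's.

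The paper argues directly at the geometric level. It decomposes the boundary circles of the faces of $\mathfrak{hm}$ into three families of arcs $D_0(\mathfrak{hm})$, $D_{1,S}(\mathfrak{hm})$, $D_{1,\not S}(\mathfrak{hm})$ (face/hyperedge arcs, face/$S$-vertex arcs, face/$\bar S$-vertex arcs), then tracks these through Step~3 of the hyperedge execution. The upshot is that the boundary circles of the faces of $\mathfrak{hm}^S$ coincide with the boundary of the surface one would have obtained by choosing faces rather than hyperedges in Step~1; by the evident symmetry the two executions agree.

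You instead pass to the $\tau$-model/colored-graph description, introduce the swapped involutions $\widehat\tau_1,\widehat\tau_2$, and identify $\partial F$ in each execution with the $\{0,2\}$- and $\{0,1\}$-bubbles of the recoloured graph $\Gamma^S=(X;\tau_0,\widehat\tau_1,\widehat\tau_2)$. Both executions then visibly yield the same colored graph, hence the same hypermap. This buys you more than the lemma: en route you have essentially derived the paper's later Theorems~\ref{thm:pd-t} and~\ref{thm:pd-cg} (the $\tau$-model and colored-graph descriptions of partial duality). The cost is that your argument relies on the correspondence of Subsection~\ref{ssec-edge-colored-graphs} and pre-empts material the paper develops only after establishing the lemma; the paper's own proof stays entirely within the geometric Definition~\ref{def:pdual} and is shorter for that reason. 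Your color-by-color case check for which arcs lie on $\partial F$, together with the observation that $\langle\tau_1,\tau_2\rangle$ stabilises each of $X_S$ and $X_{\bar S}$, is exactly the right bookkeeping and makes the identification of the new $\tau$-involutions transparent.
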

\begin{proof}
Decompose the boundary circles of faces on a hypermap $\mathfrak{hm}$ into the union of three sets of arcs intersecting only at the end points of the arcs, 
$D_0(\mathfrak{hm})\cup D_{1,S}(\mathfrak{hm})\cup D_{1.\not{S}}(\mathfrak{hm})$. The set $D_0(\mathfrak{hm})$ consist of arc of intersection of faces with hyperedges, $D_{1,S}(\mathfrak{hm})$  --- of faces with vertices from the set $S$, and $D_{1.\not{S}}(\mathfrak{hm})$ --- of faces with vertices not from $S$.
Analyzing the result of Step 3 of the construction one can easily note that $D_0(\mathfrak{hm})=D_0(\mathfrak{hm}^S)$ and
$D_{1.\not{S}}(\mathfrak{hm})=D_{1.\not{S}}(\mathfrak{hm}^S)$.
Moreover, $D_{1,S}(\mathfrak{hm}^S)$ consists of the complementary
arcs of the boundary circles of vertices from $S$ to the arcs
$D_{1,S}(\mathfrak{hm})$; formally the complementary arcs on the
second copies of the vertices of $S$. This means that the boundary
circles of faces of $\mathfrak{hm}^S$ are exactly the boundary circles
of the surface obtained by the union of vertices of $S$ and all the
faces. In other words on Step 1 we may take faces instead of hyperedges and we will get the same boundary circles as for $\mathfrak{hm}^S$. Then, by symmetry the hyperedges will also be the same.
\end{proof}

Analogously to \cite[Sec.1.8]{Chmutov2007aa} the following lemma describes simple properties of the partial duality for hypermaps. Its proof is obvious.

\begin{lemmab}\label{lemma:prop}
\begin{itemize}
\item[(a)] $\del[0]{\mathfrak{hm}^S}^S= \mathfrak{hm}$.
\item[(b)] There is a bijection between the cells of type $S$ in
$\mathfrak{hm}$ and the cells of the same type in $\mathfrak{hm}^S$. 
This bijection preserves the valency of cells.
The number of cells of other types may change.
\item[(c)] If $s\not\in S$ but has the same type as the cells of $S$, then 
$\mathfrak{hm}^{S\cup\{s\}} = \bigl(\mathfrak{hm}^S\bigr)^{\{s\}}$.
\item[(d)] $\bigl(\mathfrak{hm}^{S'}\bigr)^{S''}= \mathfrak{hm}^{\Delta(S',S'')}$, where 
    $\Delta(S',S''):=(S'\cup S'')\setminus (S'\cap S'')$ is the symmetric difference of sets.
\item[(e)] Partial duality preserves orientability of hypermaps.
\end{itemize}
\end{lemmab}

\subsection{Partial duality in $\mathbf{\sigma}$-model}
\label{ssec-pdual-s}

For an oriented hypermap $\mathfrak{hm}$ represented in the $\sigma$-model of \cref{ssec-sigma-model} we shall write
$$\mathfrak{hm} = (\sigma_V, \sigma_E, \sigma_F)\ .
$$

\begin{thm}\label{thm:pd-s}
Let $S$ be a subset $S:=V'$ of vertices (resp. subset of hyperedges $S:=E'$ and subset of faces $S:=F'$) of a hypermap $\mathfrak{hm}$. Then its partial dual is given by the permutations
$$\begin{array}{ccl}
\mathfrak{hm}^{V'}&=&(\sigma_{\overline{V'}}\sigma_{V'}^{-1},\ 
           \sigma_E\sigma_{V'},\ \sigma_{V'}\sigma_F) \vspace{10pt}\\
\mathfrak{hm}^{E'}&=&(\sigma_{E'}\sigma_V,\        
        \sigma_{\overline{E'}}\sigma_{E'}^{-1},\ \sigma_F\sigma_{E'})
        \vspace{10pt}\\
\mathfrak{hm}^{F'}&=&(\sigma_V\sigma_{F'},\ \sigma_{F'}\sigma_E,\    
        \sigma_{\overline{F'}}\sigma_{F'}^{-1})\ ,
\end{array}
$$
where $\sigma_{V'}$, $\sigma_{E'}$, $\sigma_{F'}$ denote the
permutations consisting of the cycles corresponding to the elements of $V'$, $E'$, $F'$ respectively, and overline means the complementary set of cycles. 
\end{thm}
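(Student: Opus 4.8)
The plan is to verify the three formulas by tracking carefully what the four-step construction of Definition~\ref{def:pdual} does to the set of half-edges and to the boundary-arc combinatorics, and then reading off the three new permutations via their description in terms of the $\tau$-model (Subsection~\ref{ssec-sigma-model}) together with the Euler-characteristic/orbit bookkeeping. By Lemma~\ref{lemma:prop}(c) and (d) it suffices to treat the case $S=V'$ and, in fact, it suffices to treat $S=\{v\}$ a single vertex, since a general $V'$ is obtained by iterating the one-vertex case and the formulas are manifestly compatible with such iteration: composing $(\sigma_{\overline{\{v\}}}\sigma_{\{v\}}^{-1},\sigma_E\sigma_{\{v\}},\sigma_{\{v\}}\sigma_F)$ twice along a second vertex $w$ reproduces the formula for $\{v,w\}$ after a short permutation-algebra check (the cycles $\sigma_v$ and $\sigma_w$ are disjoint, so they commute and $\sigma_{\{v,w\}}=\sigma_v\sigma_w$). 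Finally the $E'$ and $F'$ formulas follow from the $V'$ formula by the symmetry of the hypermap concept under cyclic relabelling of the three cell types, exactly as in Lemma~\ref{lemma:ind-type}: applying the total duality that sends $(V,E,F)\mapsto(E,F,V)$, running the $V'$-formula, and dualizing back yields the $E'$ line, and once more the $F'$ line.

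So the core is the single-vertex case. Here I would use the half-edge set $H$ of $\mathfrak{hm}$ and observe that Steps~1--4 do \emph{not} change $H$: Step~3 takes a fresh copy of each vertex disk attached along the \emph{same} boundary intervals, so each half-edge of $\mathfrak{hm}$ survives as a half-edge of $\mathfrak{hm}^{\{v\}}$, and conversely. Thus the new permutations $\sigma_V',\sigma_E',\sigma_F'$ live on the same set $H$. Now I identify them one at a time. The new hyperedges are the components of $\partial F$ where $F$ is the union of the $v$-disk with all hyperedge-disks; going around such a boundary component one alternately traverses arcs on $\partial(v\text{-disk})$ and arcs on the boundaries of hyperedge-disks not containing $v$, and the resulting cyclic order of half-edges is precisely the cyclic order encoded by the product $\sigma_E\sigma_{\{v\}}$ — on half-edges at $v$ one first applies $\sigma_{\{v\}}$ (the rotation at $v$) and then $\sigma_E$ (the rotation within a hyperedge star), while on half-edges not at $v$ the cycle $\sigma_{\{v\}}$ acts trivially and $\sigma_E\sigma_{\{v\}}=\sigma_E$. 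This is the middle entry. For the new vertices: the $v$-disk is cut open along its attaching intervals and re-glued so that the consecutive half-edges of its boundary are now linked by the \emph{complementary} arcs; combinatorially this replaces the single cycle $\sigma_{\{v\}}$ by $\sigma_{\{v\}}^{-1}$ acting only on the half-edges at $v$, while all other cycles of $\sigma_V$ are untouched, i.e. the new $\sigma_V' = \sigma_{\overline{\{v\}}}\,\sigma_{\{v\}}^{-1}$. The new faces are then \emph{forced}: by Lemma~\ref{lem-GenusCH} (or simply because $\sigma_F'$ is determined by $\sigma_F'\sigma_E'\sigma_V' = 1$), one computes
$$
\sigma_F' \;=\; (\sigma_E'\sigma_V')^{-1}
\;=\;\bigl(\sigma_E\sigma_{\{v\}}\cdot\sigma_{\overline{\{v\}}}\sigma_{\{v\}}^{-1}\bigr)^{-1}
\;=\;\sigma_{\{v\}}\sigma_{\overline{\{v\}}}^{-1}\sigma_{\{v\}}^{-1}\sigma_E^{-1}
\;=\;\sigma_{\{v\}}\sigma_F,
$$
using $\sigma_F = \sigma_E^{-1}\sigma_V^{-1} = \sigma_E^{-1}\sigma_{\overline{\{v\}}}^{-1}\sigma_{\{v\}}^{-1}$ and that $\sigma_{\{v\}}$ commutes with $\sigma_{\overline{\{v\}}}$ (disjoint supports) — giving the third entry and confirming $\sigma_F'\sigma_E'\sigma_V'=1$ automatically.

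The step I expect to be the main obstacle is the geometric identification in the previous paragraph of the boundary-component cyclic orders with the \emph{products} $\sigma_E\sigma_{\{v\}}$ and $\sigma_{\{v\}}^{-1}$ — i.e. making rigorous, rather than picture-driven, the claim that "re-gluing the $v$-disk along complementary intervals inverts the rotation at $v$" and that "the new hyperedge boundary walks compose the $v$-rotation with the old hyperedge rotations." I would make this precise by passing to the $\tau$-model: on the doubled set $X$, Step~3 of the construction amounts to keeping $\tau_0$ and $\tau_2$ but conjugating/altering $\tau_1$ only on the flags at $v$, and then translating back to $\sigma$'s via the dictionary $\sigma_V = \tau_2\tau_1|_H$, $\sigma_E = \tau_0\tau_2|_H$, $\sigma_F = \tau_1\tau_0|_H$ of Subsection~\ref{ssec-sigma-model}. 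Everything else is disjoint-cycle algebra, and orientability is preserved by Lemma~\ref{lemma:prop}(e), so the constructed $(\sigma_V',\sigma_E',\sigma_F')$ on $H$ does define an oriented hypermap, which by the above coincides with $\mathfrak{hm}^{\{v\}}$.
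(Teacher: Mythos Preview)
Your approach is essentially the paper's: reduce to the $V'$ case by symmetry, note that the set of half-edges is unchanged, read off $\sigma_V(\mathfrak{hm}^{V'})$ and $\sigma_E(\mathfrak{hm}^{V'})$ by tracing how the new vertex-disks and the boundary $\partial F$ are attached, and obtain the face permutation from the relation $\sigma_F\sigma_E\sigma_V=1$. The extra reduction to a single vertex via Lemma~\ref{lemma:prop}(c) is a harmless convenience; the paper simply treats all of $V'$ at once since the cycles involved have disjoint supports anyway.

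Two small corrections. First, in your displayed computation you write $\sigma_F=\sigma_E^{-1}\sigma_V^{-1}$; the relation $\sigma_F\sigma_E\sigma_V=1$ gives $\sigma_F=\sigma_V^{-1}\sigma_E^{-1}$. Your final answer $\sigma_F'=\sigma_{\{v\}}\sigma_F$ is nevertheless correct, because $\sigma_{\{v\}}\sigma_{\overline{\{v\}}}^{-1}\sigma_{\{v\}}^{-1}\sigma_E^{-1}=\sigma_{\overline{\{v\}}}^{-1}\sigma_E^{-1}=\sigma_{\{v\}}\sigma_V^{-1}\sigma_E^{-1}$, so only the justification needs fixing. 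Second, your proposed $\tau$-model rigorization is not quite right: Step~3 does \emph{not} keep $\tau_2$ fixed. As the paper shows in Theorem~\ref{thm:pd-t}, the effect is that $\tau_1$ and $\tau_2$ \emph{swap} their transpositions on the flags around $v$ (while $\tau_0$ is unchanged). If you want to carry out the $\tau$-model route, use that swap and then the dictionary $\sigma_V=\tau_2\tau_1|_H$, $\sigma_E=\tau_0\tau_2|_H$; the claimed formulas then drop out. The paper itself stays in the geometric picture and relies on Figures~\ref{fig:pd-new-vert} and~\ref{fig:pd-he} together with a specific labeling convention for the ``new'' half-edges (a new half-edge inherits the label of the old one preceding it in the orientation of the old vertex), which is exactly the bijection you are implicitly using.
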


Similar formulas in the particular case of maps were announced in \cite{Gross2020ab} 
(see also \cite[Section 5.2]{Gross2020aa}).

\begin{proof}
Because of the symmetry it is sufficient to prove the theorem in the
case $S=V'$. From \cref{def:pdual} it follows that the number of half-edges is preserved by the partial duality. We need to make a bijection between half-edges of $\mathfrak{hm}$ and those of $\mathfrak{hm}^S$ such that the corresponding permutations are related as in the first equation of the theorem. 

Half-edges are attached to vertices. If a vertex does not belong to
$S$ then the attachment of half-edges to it does not change with
partial duality (Step 2). So for those half-edges the required
bijection is the identity.

Half-edges attached to vertices of $S$ change, so we need to indicate the bijection for them.
Consider a vertex-disk from the set $S$ of the original hypermap $\mathfrak{hm}$. It can be represented as a
$2k$-gon because the arcs of its boundary circle intersecting with hyperedges and faces alternate. In $\mathfrak{hm}$ it has $k$ half-edges
attached along every other side. We call them {\it old half-edges}. These half-edges together with the vertex-disk form a piece of the surface $F$ on Step 1 near the vertex. The orientation of $F$ induces an orientation on its boundary $\partial F$. In the partial dual 
$\mathfrak{hm}^S$ the hyper-edges, the {\it new hyperedges}, are
attached to every connected component of $\partial F$ (Step 2). The
orientation of $\partial F$ induces the orientation on new
hyperedges. They are attached to a new vertex (Step 3) along the other
sides of the $2k$-gon, which form {\it new half-edges}. Set the label
of a new half-edge to be the same as the label of the old one preceding the new half-edge in the direction of the orientation of the old vertex. This gives the bijection of half-edges around vertices of $S$.
The orientation on the new vertex, as well as on the entire hypermap $\mathfrak{hm}^S$, is induced from the new hyperedges.
\begin{figure}[!htp]
  \centering
  \includegraphics[scale=.5]{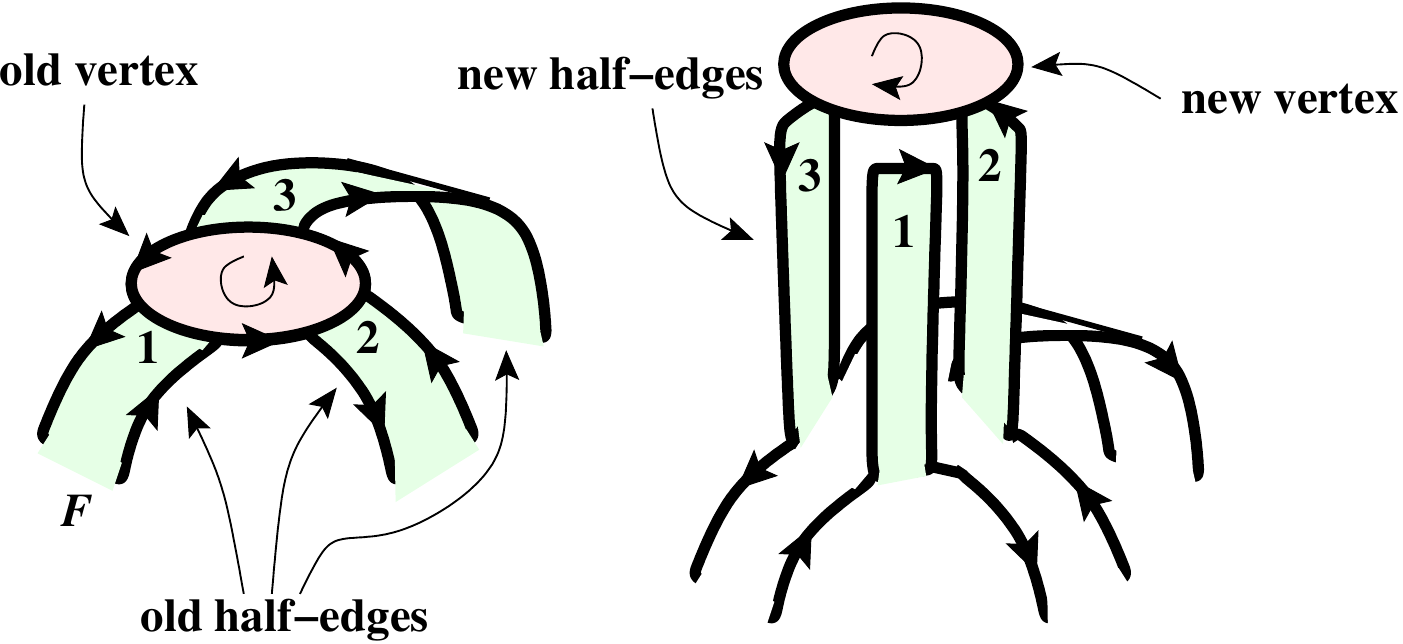}
  \caption{Permutation $\sigma_V(\mathfrak{hm}^S)$}
  \label{fig:pd-new-vert}
\end{figure}
Figure \ref{fig:pd-new-vert} shows that the labels of the new
half-edges appear around new vertices in the order opposite to the one
around old ones. This means that the cycle in the permutation $\sigma_V$ corresponding to a vertex in $S$ of the initial hypermap $\mathfrak{hm}$ should be inverted
to get the cycle for $\mathfrak{hm}^S$. This proves
that the first term of the first formula of the theorem
$\sigma_V(\mathfrak{hm}^S)= \sigma_{V'}^{-1}(\mathfrak{hm}) \sigma_{\overline{V'}}(\mathfrak{hm})$.

For the second term we need to analyze the cyclic order of new
half-edges around new hyperedge according to its orientation. It may
be read off from labels of the half-edges met when traveling along the boundary of the hyperedge in the direction of its orientation. Such a boundary for $\mathfrak{hm}^S$ is exactly a connected component of  $\partial F$ with the orientation induced from $\mathfrak{hm}$. The last is given precisely by the product of permutations 
$\sigma_E(\mathfrak{hm})\sigma_{V'}(\mathfrak{hm})$. 
Indeed, consider Figure \ref{fig:pd-he}  and suppose that 
$\sigma_E(\mathfrak{hm}): 2\mapsto i$ for some $i$. Then the new half-edge labels appear at $\partial F$ in the order $\dots, 1,i,\dots$. So $\sigma_E(\mathfrak{hm}^S): 1\mapsto i$, which is equal to
$\sigma_E(\mathfrak{hm})\sigma_{V'}(\mathfrak{hm})$:
$$\xymatrixcolsep{5pc}\xymatrix{
1 \ar@{|->}[r]^{\sigma_{V'}(\mathfrak{hm})} &2
\ar@{|->}[r]^{\sigma_E(\mathfrak{hm})} &i}\ .
$$
\begin{figure}[!htp]
  \centering
  \includegraphics[scale=.5]{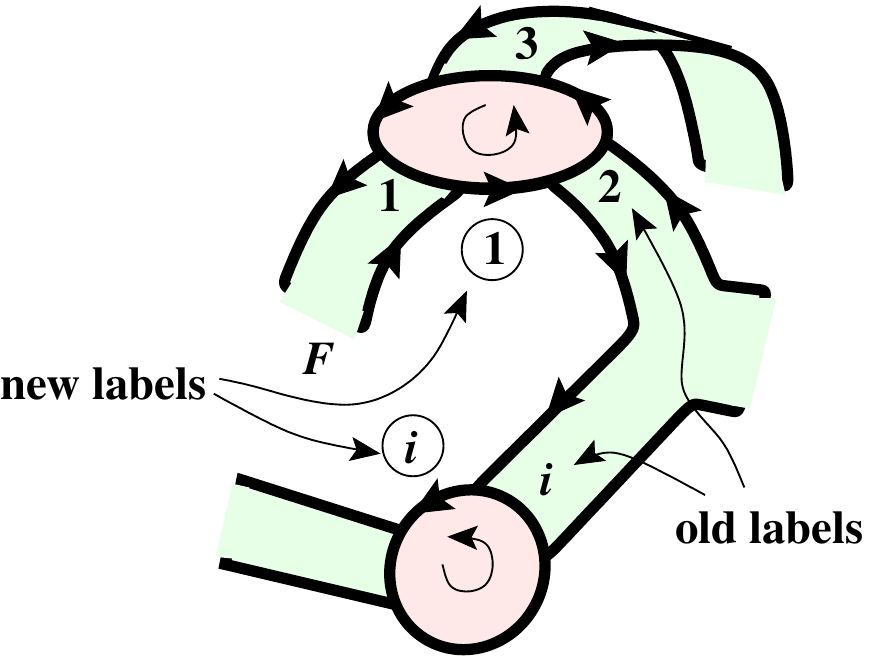}
  \caption{Permutation $\sigma_E(\mathfrak{hm}^S)$}
  \label{fig:pd-he}
\end{figure}
This proves the second term.

The third term follows from the relation $\sigma_F\sigma_E\sigma_V=1$.
\end{proof}

\begin{example}\label{exa:tau-sigma-1}
This is a continuation of \cref{exa:tau-sigma}.
We found that for the map $\mapm_1$ on \cref{fig:OMapEx} the permutations $\sigma$'s act on the set of half-edges $H=\{1,3,5,7,8,12\}$ as 
$$\sigma_V=(1,3,5)(7,8,12),\quad \sigma_E=(1,7)(3,12)(5,8), \quad 
\sigma_F=(1,12)(3,8)(5,7).$$
The cycle $(1,3,5)$ of $\sigma_V$ corresponds to the the left vertex $v$.
\begin{figure}[!htp]
  \centering
\rb{32pt}{$\mapm_1\ =\ $}\risSpdf{-10}{maps-1-c3u}{\put(11,42){$v$}}{100}{80}{0}
\quad \risSpdf{30}{totor}{}{30}{0}{0}\quad 
\rb{32pt}{$\mapm_1^{\{v\}}\ =\ $}\risSpdf{-10}{pdual-4sm}{}{130}{0}{0}
  \caption{Partial duality in $\sigma$-model.}
  \label{fig:Ex-sigma-m}
\end{figure}

For the $\sigma$-model of the partial dual $\mapm_1^{\{v\}}$, we set $V'=\{v\}$. Then $\sigma_{V'}=(1,3,5)$ and 
$\sigma_{\overline{V'}}=(7,8,12)$. According to the theorem
$$\begin{array}{rcl}
\sigma_V(\mapm_1^{\{v\}}) &=& \sigma_{\overline{V'}}\sigma_{V'}^{-1}=
     (1,5,3)(7,8,12),\\
\sigma_E(\mapm_1^{\{v\}}) &=& \sigma_E\sigma_{V'}=
     (1,7)(3,12)(5,8)(1,3,5)=(1,12,3,8,5,7),\\
\sigma_F(\mapm_1^{\{v\}}) &=& \sigma_{V'}\sigma_F = 
     (1,3,5)(1,12)(3,8)(5,7)=(1,12,3,8,5,7).
\end{array}
$$
One may check that these permutations agree with the last picture on \cref{fig:pd-st-4,fig:Ex-sigma-m}.
\end{example}

\begin{cor}\label{cor:tds}
The total duality with respect to $S:=V$ (resp. $S:=E$ and $S:=F$) is reduced to the classical Euler-Poincar\'e duality which swaps the names of two remaining types of cells and reverse the orientation.

In $\sigma$-model it is given by the formulae
$$\begin{array}{ccl}
\mathfrak{hm}^V&=& (\sigma_V^{-1},\ \sigma_E\sigma_V,\ \sigma_V\sigma_F) 
   =(\sigma_V^{-1},\ \sigma_F^{-1},\ \sigma_E^{-1})\vspace{10pt}\\
\mathfrak{hm}^E&=& (\sigma_E\sigma_V,\ \sigma_E^{-1},\ \sigma_F\sigma_E) 
   =(\sigma_F^{-1},\ \sigma_E^{-1},\ \sigma_V^{-1})\vspace{10pt}\\
\mathfrak{hm}^F&=& (\sigma_V\sigma_F,\ \sigma_F\sigma_E,\ \sigma_F^{-1}) 
   =(\sigma_E^{-1},\ \sigma_V^{-1},\ \sigma_F^{-1})\ .
\end{array}
$$
\end{cor}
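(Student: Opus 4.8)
The plan is to derive Corollary \ref{cor:tds} directly from Theorem \ref{thm:pd-s} by specializing the subset $S$ to be the full set of cells of the chosen type. First I would treat the case $S=V$. Here $V'=V$, so $\sigma_{V'}=\sigma_V$ and its complementary set of cycles $\sigma_{\overline{V'}}$ is empty, i.e. equal to the identity permutation. Substituting into the first line of Theorem \ref{thm:pd-s} gives immediately
$$\mathfrak{hm}^V=(\sigma_{\overline{V'}}\sigma_{V'}^{-1},\ \sigma_E\sigma_{V'},\ \sigma_{V'}\sigma_F)=(\sigma_V^{-1},\ \sigma_E\sigma_V,\ \sigma_V\sigma_F)\ ,$$
which is the first displayed equality. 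The cases $S=E$ and $S=F$ are obtained in exactly the same way from the second and third lines, replacing the relevant $\sigma_{\overline{\cdot'}}$ by the identity.

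Next I would simplify each triple using the fundamental relation $\sigma_F\sigma_E\sigma_V=1$ of the $\sigma$-model (Subsection \ref{ssec-sigma-model}). From $\sigma_F\sigma_E\sigma_V=1$ we get $\sigma_E\sigma_V=\sigma_F^{-1}$ and $\sigma_V\sigma_F=\sigma_E^{-1}$ (the latter by rewriting the relation as $\sigma_E\sigma_V\sigma_F=1$, which is a cyclic permutation of the same relation and hence also holds). Substituting these into $(\sigma_V^{-1},\ \sigma_E\sigma_V,\ \sigma_V\sigma_F)$ yields $(\sigma_V^{-1},\ \sigma_F^{-1},\ \sigma_E^{-1})$. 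For $S=E$: from the relation, $\sigma_E\sigma_V=\sigma_F^{-1}$ and $\sigma_F\sigma_E=\sigma_V^{-1}$ (rewrite as $\sigma_E\sigma_V\sigma_F=1$, hence $\sigma_F\sigma_E=\sigma_V^{-1}$), giving $(\sigma_F^{-1},\ \sigma_E^{-1},\ \sigma_V^{-1})$. For $S=F$: $\sigma_V\sigma_F=\sigma_E^{-1}$ and $\sigma_F\sigma_E=\sigma_V^{-1}$, giving $(\sigma_E^{-1},\ \sigma_V^{-1},\ \sigma_F^{-1})$. Each of these resulting triples again satisfies the product relation, as it must.

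Finally, for the first sentence of the corollary I would note that the resulting permutations have a transparent interpretation: in $\mathfrak{hm}^V$ the roles of the $E$- and $F$-cycles have been interchanged (up to inversion) while the $V$-cycles are merely inverted, and more precisely the cycle structure of the new $\sigma_E$ is that of the old $\sigma_F$ and vice versa. By Lemma \ref{lem-GenusCH} and the cycle-counting there, this means $c_E$ and $c_F$ are swapped, so the underlying surface (Euler characteristic) is unchanged while the names ``hyperedge'' and ``face'' are exchanged; and inversion of every cycle corresponds exactly to reversing the orientation, as explained at the end of Subsection \ref{ssec-sigma-model}. This is precisely the classical Euler--Poincar\'e duality. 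The analogous remarks hold for $S=E$ and $S=F$ by the symmetry of the hypermap concept.

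I do not expect any real obstacle here: the only thing to be slightly careful about is the bookkeeping of which cyclic rewriting of $\sigma_F\sigma_E\sigma_V=1$ to apply in each of the three cases, and the observation that ``$\sigma_{\overline{V'}}$ for $V'=V$'' is the identity permutation (empty product of cycles) rather than something requiring further argument. Everything else is a direct substitution into Theorem \ref{thm:pd-s}.
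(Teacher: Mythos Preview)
Your proposal is correct and matches the paper's approach: the paper states Corollary~\ref{cor:tds} without a separate proof, leaving it as the immediate specialization of Theorem~\ref{thm:pd-s} to the full set of cells together with the relation $\sigma_F\sigma_E\sigma_V=1$, and only adds the remark that the inverses account for the orientation reversal. Your write-up is exactly this specialization, with the algebra spelled out carefully.
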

The inverse of these permutations are responsible for the change of orientation of the hypermap.

\subsection{Partial duality in $\mathbf{\tau}$-model}
\label{ssec-pdual-t}

\begin{thm}\label{thm:pd-t}
Consider the $\tau$-model of a hypermap $\mathfrak{hm}$ given by the permutations
$$\tau_0(\mathfrak{hm}):(v,e,f)\mapsto (v',e,f), \quad
\tau_1(\mathfrak{hm}):(v,e,f)\mapsto (v,e',f), \quad
\tau_2(\mathfrak{hm}):(v,e,f)\mapsto (v,e,f')
$$ of its local flags. 
Let $V'$ be a subset of its vertices, $\tau_1^{V'}$ be the product of all transpositions in $\tau_1$ for $v\in V'$, and $\tau_2^{V'}$ be the product of all transpositions in $\tau_2$ for $v\in V'$.
Then its partial dual $\mathfrak{hm}^{V'}$ is given by the permutations
$$\tau_0(\mathfrak{hm}^{V'})=\tau_0,\qquad 
\tau_1(\mathfrak{hm}^{V'})=\tau_1 \tau_1^{V'} \tau_2^{V'},\qquad  
\tau_2(\mathfrak{hm}^{V'})=\tau_2 \tau_1^{V'} \tau_2^{V'}\ .
$$
In other words the permutations $\tau_1$ and $\tau_2$ swap their transpositions of local flags around the vertices in $V'$.
Similar statements hold for partial dualities relative to the subsets of hyperedges $E'$ and of faces $F'$.
\end{thm}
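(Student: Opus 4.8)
The strategy is to reduce the $\tau$-model statement to the $\sigma$-model statement already proven in Theorem \ref{thm:pd-s}, in the orientable case, and then argue that the $\tau$-formula holds in general because both sides are defined combinatorially on the set of flags without reference to orientation. First I would recall the dictionary from Subsection \ref{ssec-sigma-model}: given $\mathfrak{hm}$ orientable and connected with flag set $X=\{1_-,1_+,\dots,m_-,m_+\}$, the half-edge set is $H=\{1,\dots,m\}$ (the ``$+$''-orbit of $G=\langle\tau_2\tau_1,\tau_0\tau_2,\tau_1\tau_0\rangle$), and $\sigma_V=(\tau_2\tau_1)|_H$, $\sigma_E=(\tau_0\tau_2)|_H$, $\sigma_F=(\tau_1\tau_0)|_H$. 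Conversely $\tau_2(i_\pm)=i_\mp$, $\tau_0(i_-)=(\sigma_E(i))_+$, $\tau_1(i_-)=(\sigma_V^{-1}(i))_+$, etc. A subset $V'$ of vertices corresponds to a subset of cycles of $\sigma_V$, equivalently to a subset of the transpositions making up $\tau_1$ (those of the form $(i_-,(\sigma_V^{-1}i)_+)$ for $i$ in a $V'$-cycle) and, along the same flags, a subset of the transpositions of $\tau_2$; these are exactly the $\tau_1^{V'}$ and $\tau_2^{V'}$ of the statement.

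The main computation is then to verify that the permutations produced by Theorem \ref{thm:pd-s} for $\mathfrak{hm}^{V'}$, namely $(\sigma_{\overline{V'}}\sigma_{V'}^{-1},\ \sigma_E\sigma_{V'},\ \sigma_{V'}\sigma_F)$ on $H$, reassemble under the $\sigma\!\to\!\tau$ dictionary into the involutions $\tau_0$, $\tau_1\tau_1^{V'}\tau_2^{V'}$, $\tau_1\tau_1^{V'}\tau_2^{V'}$ on $X$. The key observations are: (i) $\tau_0$ is built solely from $\sigma_E$, and since the partial dual replaces $\sigma_E$ by $\sigma_E\sigma_{V'}$ while $\tau_1$ (via $\sigma_V$) gets replaced by something that precisely cancels the extra $\sigma_{V'}$ — so that the composite defining $\tau_0$ is unchanged — one reads off $\tau_0(\mathfrak{hm}^{V'})=\tau_0$; (ii) on flags indexed by vertices \emph{not} in $V'$, nothing changes, so it suffices to check the formula transposition-by-transposition on flags around a single vertex of $V'$, where the ``inversion of the cycle'' phenomenon observed in Figure \ref{fig:pd-new-vert} translates exactly into the statement that the transpositions of $\tau_1$ and $\tau_2$ get swapped; (iii) the equality of the two expressions $\tau_1(\mathfrak{hm}^{V'})=\tau_2(\mathfrak{hm}^{V'})$ is forced because in the partial dual the distinction between hyperedge-arcs and face-arcs around the vertices of $V'$ has been interchanged, which is precisely what the relation $\tau_0\tau_2 = $ (new $\sigma_E$) together with $\sigma_E\sigma_{V'}=\sigma_F(\mathfrak{hm}^{V'})^{-1}\cdots$ records.

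For the non-orientable case, I would invoke that both the partial dual of Definition \ref{def:pdual} and the right-hand side of the asserted $\tau$-formula are purely combinatorial operations on the triple $(\tau_0,\tau_1,\tau_2)$ that make no use of a global orientation; since they agree on all orientable hypermaps and the formula involves only local surgery on the transpositions around vertices of $V'$ (which is detected within the neighborhood of those vertices, where orientability is automatic by Lemma \ref{lemma:or-cgr} applied locally), the identity persists. Alternatively, and perhaps more cleanly, one argues directly from Step 3 of Definition \ref{def:pdual}: the new flag triangle attached to a copy of a vertex $v\in V'$ sits on the ``other'' arc of the $2k$-gon, so what was a $\tau_2$-partner across a hyperedge-arc becomes a $\tau_1$-partner across the same arc now reinterpreted as a face-arc, and vice versa; this is exactly $\tau_1 \leftrightarrow \tau_2$ on the transpositions indexed by $V'$, with $\tau_0$ (the hyperedge/face arcs not touching $V'$, i.e.\ the $0$-colored edges of $\Gamma$) untouched.

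\textbf{Main obstacle.} The genuinely delicate point is bookkeeping the orientation in step (ii): verifying that the \emph{inversion} of each $V'$-cycle of $\sigma_V$ (Figure \ref{fig:pd-new-vert}) is consistent with the simultaneous swap $\tau_1^{V'}\leftrightarrow\tau_2^{V'}$ rather than producing some twisted variant, and confirming that the two listed formulas for $\tau_1(\mathfrak{hm}^{V'})$ and $\tau_2(\mathfrak{hm}^{V'})$ really do coincide as written (and are honest fixed-point-free involutions on all of $X$, not just on the $V'$-part). Everything else is a direct substitution into the dictionary of Subsection \ref{ssec-sigma-model}.
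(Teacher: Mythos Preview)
Your ``alternatively, and perhaps more cleanly'' paragraph \emph{is} the paper's proof: the authors argue directly from Step~3 of Definition~\ref{def:pdual} that for $v\notin V'$ nothing changes around $v$, while for $v\in V'$ the roles of hyperedge-arcs and face-arcs in the local flags are interchanged, so the transpositions of $\tau_1$ and $\tau_2$ on those flags are swapped. That is the entire argument; there is no appeal to the $\sigma$-model or to orientability.

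Your primary route through Theorem~\ref{thm:pd-s} can be made to work, but it is a detour. The dictionary of Subsection~\ref{ssec-sigma-model} builds a $\tau$-model from a $\sigma$-model with a \emph{fixed} convention that $\tau_2$ swaps $i_-\leftrightarrow i_+$; applying it blindly to the $\sigma$-model of $\mathfrak{hm}^{V'}$ therefore produces a $\tau$-model in which $\tau_2$ is literally unchanged, not the one in the statement. To reconcile this with the claimed formula you must track the geometric bijection of flags (not just half-edges) between $\mathfrak{hm}$ and $\mathfrak{hm}^{V'}$ and see that the $\pm$ labels flip on the $V'$-flags --- which is exactly the bookkeeping you flag as the main obstacle, and which the direct argument sidesteps entirely.

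There is one genuine error. In point~(iii) and again in your ``main obstacle'' you try to justify that $\tau_1(\mathfrak{hm}^{V'})$ and $\tau_2(\mathfrak{hm}^{V'})$ coincide as permutations. They do not: the displayed formula in the statement contains a typo (the right-hand side of the $\tau_2$ line should read $\tau_2\,\tau_2^{V'}\,\tau_1^{V'}$, as the sentence ``In other words\dots'' makes clear). If $\tau_1(\mathfrak{hm}^{V'})=\tau_2(\mathfrak{hm}^{V'})$ held, every $\langle\tau_1,\tau_2\rangle$-orbit of the partial dual would have size at most~$2$, i.e.\ every vertex of $\mathfrak{hm}^{V'}$ would be univalent --- false already in Example~\ref{exa:tau-sigma-2}. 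Drop that step; the correct content is simply the swap $\tau_1^{V'}\leftrightarrow\tau_2^{V'}$.
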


A particular case of these formulas for maps
was rediscovered in
\cite[Equation 14]{Gross2020ac} and announced in \cite{Gross2020ab} 
(see also \cite[Section 5.2]{Gross2020aa}).

\begin{proof} From \cref{def:pdual} one may see that if a vertex does  not participate in the partial duality, $v\not\in V'$, then nothing changes with local flags around it. But if $v\in V'$, then the roles of edges and faces in its local flags are interchanged. This may be seen on Step 3 and also on \cref{fig:pd-new-vert,fig:pd-he} when the second copy of the vertex is attached to the new hyperedges. So, if two such local flags were transposed by $\tau_1$ of the original hypermap, then they will be transposed by $\tau_2$ of the partial dual and vise versa.
\end{proof}

\begin{example}\label{exa:tau-sigma-2}
In \cref{exa:tau-sigma}, we found the $\tau$-model  for the map
$\mapm_1$ of \cref{fig:OMapEx}:
\begin{align*}
\tau_0&=(1,11)(2,12)(3,10)(4,8)(5,9)(6,7),\quad 
\tau_1=(1,2)(3,4)(5,6)(7,9)(8,10)(11,12),\\
\tau_2&=(1,6)(2,3)(4,5)(7,11)(8,9)(10,12).
\end{align*}
\begin{figure}[!htp]
  \centering
\rb{32pt}{$\mapm_1\ =\ $}\risSpdf{-10}{maps-1-c3}{\put(11,42){$v$}}{100}{80}{0}
\quad \risSpdf{30}{totor}{}{30}{0}{0}\quad 
\rb{32pt}{$\mapm_1^{\{v\}}\ =\ $}\risSpdf{-10}{pdual-4tm}{}{130}{0}{0}
  \caption{Partial duality in $\tau$-model.}
  \label{fig:Ex-tau-m}
\end{figure}
There are six flags around the left vertex $v$ labeled by $1,2,\dotsc,
6$. The corresponding transpositions around this vertex are
$$\tau_1^{\{v\}}=(1,2)(3,4)(5,6),\qquad 
\tau_2^{\{v\}}=(1,6)(2,3)(4,5).
$$
Swapping them between $\tau_1$ and $\tau_2$, we get the $\tau$-model of the partial dual
\begin{align*}
\tau_0(\mathfrak{hm}^{\{v\}})&=(1,11)(2,12)(3,10)(4,8)(5,9)(6,7),\\
\tau_1(\mathfrak{hm}^{\{v\}})&=(1,6)(2,3)(4,5)(7,9)(8,10)(11,12), \\
\tau_2(\mathfrak{hm}^{\{v\}})&=(1,2)(3,4)(5,6)(7,11)(8,9)(10,12).
\end{align*}
This agrees with the labeling of flags on \cref{fig:pd-st-4,fig:Ex-tau-m}.
\end{example}

\begin{cor}\label{cor:tdt}
The $\tau$-model of the total dual $\mathfrak{hm}^{V}$ of a hypermap $\mathfrak{hm}$ is given by the involutions
$$\tau_0(\mathfrak{hm}^{V}) = \tau_0(\mathfrak{hm}),\qquad
\tau_1(\mathfrak{hm}^{V}) = \tau_2(\mathfrak{hm}),\qquad
\tau_2(\mathfrak{hm}^{V}) = \tau_1(\mathfrak{hm}).
$$
\end{cor}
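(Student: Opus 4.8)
The plan is to derive Corollary~\ref{cor:tdt} directly from Theorem~\ref{thm:pd-t} by specializing $V'$ to the full set $V$ of vertices. First I would observe that when $V'=V$, every transposition of $\tau_1$ is a transposition ``around a vertex of $V'$'', since each local flag $(v,e,f)$ has its vertex $v$ in $V=V'$; hence $\tau_1^{V'}=\tau_1$, and likewise $\tau_2^{V'}=\tau_2$. This is the only nontrivial point, and it follows immediately from the fact that the vertices of $\mathfrak{hm}$ partition the set of local flags via the orbits of $\langle\tau_1,\tau_2\rangle$, so the transpositions of $\tau_1$ (and of $\tau_2$) are partitioned according to which vertex they sit at.

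Next I would substitute these identities into the formulas of Theorem~\ref{thm:pd-t}. For the $\tau_1$-component we get
$$\tau_1(\mathfrak{hm}^{V})=\tau_1\,\tau_1^{V}\,\tau_2^{V}=\tau_1\,\tau_1\,\tau_2=\tau_2,$$
using that $\tau_1$ is an involution, so $\tau_1\tau_1=1$. For the $\tau_2$-component, the theorem's displayed formula (read as the symmetric counterpart ``$\tau_2(\mathfrak{hm}^{V'})=\tau_2\,\tau_2^{V'}\,\tau_1^{V'}$'', which is what the phrase ``$\tau_1$ and $\tau_2$ swap their transpositions around the vertices in $V'$'' means) gives
$$\tau_2(\mathfrak{hm}^{V})=\tau_2\,\tau_2^{V}\,\tau_1^{V}=\tau_2\,\tau_2\,\tau_1=\tau_1.$$
And $\tau_0$ is untouched by construction, so $\tau_0(\mathfrak{hm}^{V})=\tau_0(\mathfrak{hm})$. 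This is exactly the claimed set of involutions.

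Finally, to make the corollary's geometric content explicit, I would note that interchanging $\tau_1$ and $\tau_2$ interchanges the roles of the generating pairs $\langle\tau_1,\tau_2\rangle$ (vertices) only with itself, but swaps $\langle\tau_0,\tau_2\rangle$ (hyperedges) with $\langle\tau_0,\tau_1\rangle$ (faces); this is precisely the statement that total duality relative to all vertices fixes the vertex-cells and swaps hyperedge-cells with face-cells, matching the description in Subsection~\ref{ssec-definition}. The main obstacle—and it is a minor one—is simply making sure the bookkeeping of ``transpositions around a vertex in $V'$'' is correctly interpreted so that $\tau_i^{V}=\tau_i$; once that is granted, the corollary is a one-line consequence of the involution identity $\tau_i\tau_i=1$.
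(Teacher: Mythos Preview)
Your proposal is correct and is exactly the argument the paper has in mind: the corollary is stated without proof, as an immediate specialization of Theorem~\ref{thm:pd-t} to $V'=V$. Your write-up makes explicit the only two observations needed---that $\tau_i^{V}=\tau_i$ because every local flag sits at some vertex, and that $\tau_i\tau_i=1$---and you correctly read past the evident typo in the displayed formula of Theorem~\ref{thm:pd-t} (where the expression for $\tau_2(\mathfrak{hm}^{V'})$ is accidentally duplicated from $\tau_1$) by appealing to the verbal description ``$\tau_1$ and $\tau_2$ swap their transpositions around the vertices in $V'$''.
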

One may check that this agrees with \cref{cor:tds} in the case of oriented hypermaps.

\subsection{Partial duality for coloured graphs}
\label{ssec-pdual-cgr}

Let $\Gamma_{\mathfrak{hm}}$ be the $[2]$-coloured graph corresponding to a hypermap $\mathfrak{hm}$. Let $I$ be a subset of two out of three colours, for example 
$I=\{1,2\}$, and let $S$ be a subset of 2-bubbles in
$\cB^I$ which corresponds to a subset of vertices of $\mathfrak{hm}$.

\begin{thm}\label{thm:pd-cg}
The $[2]$-coloured graph $\Gamma_{\mathfrak{hm}^S}$ of the partial dual hypermap $\mathfrak{hm}^S$ is obtained from $\Gamma_{\mathfrak{hm}}$
by swapping the colours 1 and 2 for all edges in the 2-bubbles of $S$.
In particular, the underlying graphs of $\Gamma_{\mathfrak{hm}^S}$ and 
$\Gamma_{\mathfrak{hm}}$ are the same.
\end{thm}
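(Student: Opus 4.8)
The plan is to reduce Theorem \ref{thm:pd-cg} to the already-proven $\tau$-model statement of Theorem \ref{thm:pd-t}, using the dictionary between $[2]$-colored graphs and $\tau$-models established in Subsection \ref{ssec-edge-colored-graphs}. Recall that the vertices of $\Gamma_{\mathfrak{hm}}$ are exactly the local flags of $\mathfrak{hm}$, and an edge of color $i$ joins two flags precisely when they are interchanged by $\tau_i$; that is, the color-$i$ edges are the orbits (transpositions) of $\tau_i$. Thus specifying $\Gamma_{\mathfrak{hm}}$ is the same data as specifying the triple $(\tau_0,\tau_1,\tau_2)$ on the flag set $X$, and two colored graphs on the same vertex set are equal as colored graphs if and only if their associated involution triples coincide.

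First I would translate the operation ``swap the colors $1$ and $2$ for all edges lying in the $2$-bubbles of $S$''. A $2$-bubble in $\cB^{\{1,2\}}$ containing a flag $x$ is, by Definition \ref{def-bubbles}, the connected component of $x$ in the subgraph spanned by colors $1$ and $2$, i.e.\ an orbit of the subgroup $\langle\tau_1,\tau_2\rangle$ — which is precisely a vertex of $\mathfrak{hm}$. So the $2$-bubbles of $S$ are exactly the flags sitting around vertices in the corresponding subset $V'$. Swapping colors $1\leftrightarrow 2$ on precisely those edges replaces the color-$1$ transpositions of $\tau_1$ that act within $V'$-flags by color-$2$ edges and vice versa; in the language of involutions this produces exactly $\tau_1 \tau_1^{V'}\tau_2^{V'}$ and $\tau_2\tau_1^{V'}\tau_2^{V'}$ (with $\tau_0$ untouched, since color-$0$ edges never lie in a $\{1,2\}$-bubble), which is the recipe of Theorem \ref{thm:pd-t}. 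One small check here: the color-swap is well-defined on the underlying graph because every flag in a $V'$-vertex has exactly one color-$1$ and one color-$2$ incident edge, so after the swap each vertex still meets exactly one edge of each color — the result is again a legitimate $[2]$-colored graph on the same underlying graph.

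Then I would finish by invoking Theorem \ref{thm:pd-t}: the $\tau$-model of $\mathfrak{hm}^S$ is $(\tau_0,\ \tau_1\tau_1^{V'}\tau_2^{V'},\ \tau_2\tau_1^{V'}\tau_2^{V'})$, which by the previous paragraph is exactly the involution triple read off from the colored graph obtained by the color-swap; hence that colored graph is $\Gamma_{\mathfrak{hm}^S}$. Since only edge colors were changed and no vertices or edges were added or removed, the underlying graphs of $\Gamma_{\mathfrak{hm}}$ and $\Gamma_{\mathfrak{hm}^S}$ coincide, giving the ``in particular'' clause. The cases $I=\{0,2\}$ (hyperedges) and $I=\{0,1\}$ (faces) follow by the same argument, or by the symmetry of the hypermap notion together with Lemma \ref{lemma:ind-type}.

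The main obstacle — really the only non-formal point — is verifying that the combinatorial color-swap on the graph matches the geometric Step~3 of Definition \ref{def:pdual}, i.e.\ that ``a $2$-bubble of colors $\{1,2\}$'' genuinely corresponds to ``the local flags around one vertex-cell.'' This is exactly the content of the correspondence in Subsection \ref{ssec-tau-model} (vertices $\leftrightarrow$ orbits of $\langle\tau_1,\tau_2\rangle$) combined with the identification of $\tau_i$-edges with color-$i$ edges; once that bookkeeping is made explicit, the theorem is an immediate consequence of Theorem \ref{thm:pd-t} and requires no further geometry.
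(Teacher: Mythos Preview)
Your proof is correct and follows essentially the same route as the paper: both reduce the statement to Theorem~\ref{thm:pd-t} via the dictionary ``color-$i$ edges of $\Gamma_{\mathfrak{hm}}$ $\leftrightarrow$ transpositions of $\tau_i$,'' so that swapping colors $1$ and $2$ on the $\{1,2\}$-bubbles in $S$ is exactly swapping the $\tau_1$- and $\tau_2$-transpositions around the vertices of $V'$. Your write-up is more detailed (explicitly identifying $\cB^{\{1,2\}}$-bubbles with $\langle\tau_1,\tau_2\rangle$-orbits and checking the result is still a valid $[2]$-colored graph), but the argument is the same.
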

Ellingham and Zha \cite{Ellingham2015aa} obtained a similar result in the case of maps.

\begin{proof} The edges of $\Gamma_{\mathfrak{hm}}$ of colour 1 (resp. 2) correspond to 2-element orbits of $\tau_1$ (resp. $\tau_2$).
According to \cref{thm:pd-t} the partial dual hypermap is obtained by swapping the corresponding transpositions of $\tau_1$ and $\tau_2$. This corresponds to swapping the colours  1 and 2 in the bubbles of $S$.
\end{proof}

\begin{example}\label{exa:td-cgr}\rm
Here are the $[2]$-coloured graphs $\Gamma_{\mathfrak{hm}_1^V}$, $\Gamma_{\mathfrak{hm}_1^E}$ and $\Gamma_{\mathfrak{hm}_1^F}$ for the total duals of the hypermap $\mathfrak{hm}_1$ from \cref{fig:OMapEx} relative to the set of all vertices  $V$, all edges $E$, and all faces $F$. These dual hypermaps are shown on \cref{fig:tduals}.
\begin{figure}[!htp]
  \centering
  \includegraphics[scale=.3]{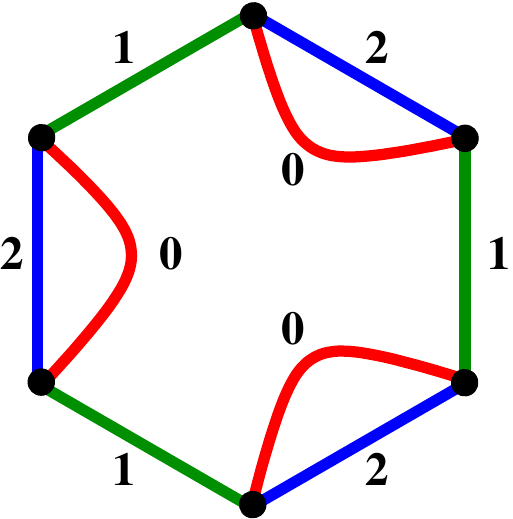}\qquad
  \includegraphics[scale=.3]{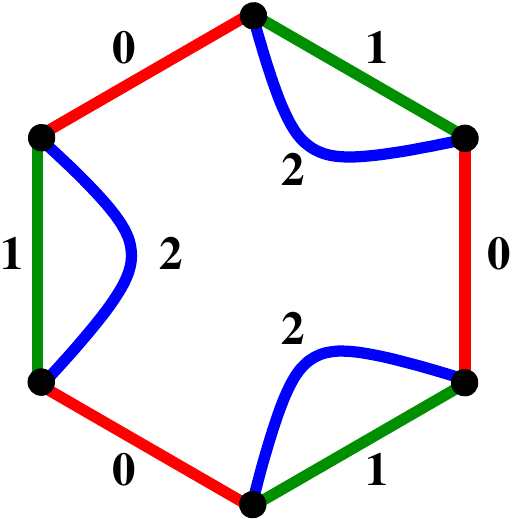}\qquad
  \includegraphics[scale=.3]{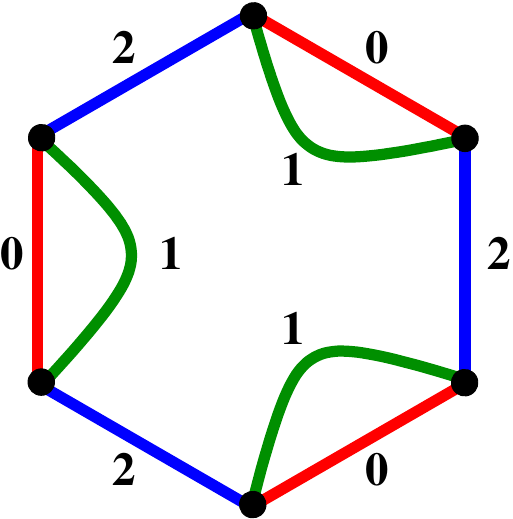}
  \caption{$[2]$-coloured graphs of total duals $\Gamma_{\mathfrak{hm}_1^V}$, $\Gamma_{\mathfrak{hm}_1^E}$, $\Gamma_{\mathfrak{hm}_1^F}$}
  \label{fig:td-cgr}
\end{figure}
\end{example}

\begin{rem}[Higher dimensional partial duality]
Such an easy interpretation of partial duality for $[2]$-coloured graphs
easily allows to make a higher dimensional generalization for $[D]$-coloured graphs $\Gamma$. Namely, fix a set $I$ of $D$ colours out of the total number of $D+1$ colours, and let $S$ be a subset of $D$-bubbles in $\cB^I$. The {\it partial dual $\Gamma^S$ relative to 
$S$} is a $[D]$-coloured graph obtained from $\Gamma$ by a permutation of
the colours of the edges in $S$.

In this case the word ``duality'' is inappropriate. It is rather an
action of a symmetric group $S_D$ on colours of edges of bubbles of
$S$. In the hypermap case, $D=2$. This group is isomorphic to $\Z_2$, so the partial duality corresponds to the only nontrivial element of order 2. But for higher $D$ the group $S_D$ contains higher order elements so they will not be ``dualities'' anymore.

This concept of higher dimensional partial duality is completely unexplored up to now. It would be very interesting to study it. In particular, is it true that if the realization $|\Gamma|$ of $\Gamma$ through its direct complex $\Delta(\Gamma)$ is a manifold, then the realization of its partial dual $|\Gamma^S|$ is also a manifold? How partial duality affects the (co)homology groups $H_*(\Delta(\Gamma))$?
\end{rem}

\section{Genus change}
\label{sec-genus}

The {\it\bfseries Euler genus} $\gamma$ is equal to twice the genus for orientable hypermaps and to the number of M\"obius bands $\mu$ in presentation of the surface of hypermaps as spheres with $\mu$ bands in them in the unorientable case. The bijection between hypermaps and $[2]$-coloured graphs, see \cref{sssec-hyperm-edge-colo}, allows to
derive a simple formula for the Euler genus change under partial duality, in terms of change of the numbers of bicoloured cycles (or
$2$-bubbles). In the case of maps, it expresses the genus change in
terms of certain induced subgraphs of the map and of its total dual.
\begin{defn}[Special subgraphs]
  \label{def-SpecialSubgraphs}
  Let $\Gamma$ be a $[2]$-coloured graph and $C$ be a subset of $\cB^{\set{i,j}}(\Gamma)$, $i,j\in[2]$ relative to which we are going to do the partial duality. Let $k$ denote the unique element of
  $[2]\setminus\set{i,j}$. For all $t\in\set{i,j}$, we define
  \begin{itemize}
  \item $\bar\Gamma[C;tk]$ as the (possibly
    disconnected, not properly) edge coloured subgraph of $\Gamma$ made
of the cycles in $C$ and all the $tk$-cycles incident with $C$,
\item $\Gamma_{\!s}[C;tk]$ as the (possibly
    disconnected, properly) edge coloured
  graph obtained from $\bar\Gamma[C;tk]$ by contracting
  (in the sense of coloured graphs) all the $t$-edges not in $C$. 
  So every $tk$-path outside $C$ will be replaced by a single edge of colour $k$.
  \end{itemize}
\end{defn}
An example is given in \cref{f-SpecialSubgraphs}.
\begin{figure}[!htp]
  \centering
    \begin{subfigure}[c]{.5\linewidth}
      \centering
      \begin{tikzpicture}
      \def\u{2cm};
      \def\vsh{(.7*\u,.4*\u)};
      \draw[arete,draw=oneS] (0,0)-- node[lab,swap] {$1$} (\u,0);
      \draw[arete,draw=zeroS] (\u,0)--
      node[lab,swap,near end] {$0$} (\u,\u);
      \draw[arete,draw=oneS] (\u,\u)-- node[lab,swap] {$1$} (0,\u);
      \draw[arete,draw=zeroS] (0,\u)-- node[lab,swap] {$0$} (0,0);
      \draw[arete,draw=twoS] (\u,0) -- node[lab,swap] {$2$} +\vsh;
      \draw[arete,draw=twoS] (0,\u) -- node[lab] {$2$} +\vsh;
      \draw[arete,draw=twoS] (\u,\u) -- node[lab,swap] {$2$} +\vsh;
      \draw[arete, dashed, thick, shift={\vsh},draw=oneS] (0,0)--node[lab,swap] {$1$} (\u,0);
      \draw[arete, dashed, thick, shift={\vsh},draw=zeroS] (0,0)-- node[lab,swap,near start] {$0$} (0,\u);
      \draw[arete, dashed, thick,draw=twoS] (0,0)-- node[lab] {$2$} +\vsh;
      \draw[arete,shift={\vsh},draw=oneS] (0,\u) -- node[lab] {$1$} (\u,\u);
      \draw[arete,shift={\vsh},draw=zeroS] (\u,0) -- node[lab,swap] {$0$} (\u,\u);

      \node[vertex] at (0,0) {};
      \node[vertex] at (\u,0) {};
      \node[vertex] at (0,\u) {};
      \node[vertex] at (\u,\u) {};
      \node[vertex, shift={\vsh}] at (0,0) {};
      \node[vertex, shift={\vsh}] at (\u,0) {};
      \node[vertex, shift={\vsh}] at (0,\u) {};
      \node[vertex, shift={\vsh}] at (\u,\u) {};
      \end{tikzpicture}
      \caption{A coloured graph $\Gamma$; $C$ is the front $01$-cycle; $k=2$}
      \label{SpSubg1}
    \end{subfigure}\\
    \bigskip
    \begin{subfigure}[b]{.4\linewidth}
      \centering
      \begin{tikzpicture}
          \def\u{2cm};
          \def\vsh{(.7*\u,.4*\u)};
          \coordinate (p) at (.7*\u,1.4*\u);
          
          \draw[arete,draw=oneS] (0,0)-- node[lab,swap] {$1$} (\u,0);
          \draw[arete,draw=zeroS] (\u,0)--
          node[lab,swap] {$0$} (\u,\u);
          \draw[arete,draw=zeroS] (0,\u)-- node[lab,swap] {$0$} (0,0);
          \draw[arete,draw=twoS] (\u,0) -- node[lab,swap] {$2$} +\vsh;
          \draw[arete,draw=twoS] (0,\u) -- node[lab] {$2$} +\vsh;
          \draw[arete,draw=twoS] (\u,\u) -- node[lab] {$2$} +\vsh;
          \draw[arete,dashed,thick,draw=zeroS] \vsh -- node[lab,swap] {$0$} ({\vsh}
          |- 0,\u); 
        \draw[arete,draw=zeroS] (p) -- (p |- 0,\u);
        \draw[arete,draw=oneS] (\u,\u)-- node[lab,swap] {$1$} (0,\u);
          \draw[arete, thick, dashed,draw=twoS] (0,0)-- node[lab] {$2$} +\vsh;
          \draw[arete,shift={\vsh},draw=zeroS] (\u,0) -- node[lab,swap] {$0$} (\u,\u);

          \node[vertex] (c) at (0,0) {};
          \node[vertex] (a) at (\u,0) {};
          \node[vertex] (d) at (0,\u) {};
          \node[vertex] (b) at (\u,\u) {};
          \node[vertex, shift={\vsh}] at (0,0) {};
          \node[vertex, shift={\vsh}] at (\u,0) {};
          \node[vertex, shift={\vsh}] at (0,\u) {};
          \node[vertex, shift={\vsh}] at (\u,\u) {};
        \end{tikzpicture}
        \caption{$\bar\Gamma{[C;02]}$}
        \label{SpSubg2}
      \end{subfigure}
      \begin{subfigure}[b]{.4\linewidth}
        \centering
        \begin{tikzpicture}
          \def\u{2cm};
          
          \draw[arete,draw=oneS] (0,0)-- node[lab,swap] {$1$} (\u,0);
          \draw[arete,draw=zeroS] (\u,0)--
          node[lab] {$0$} (\u,\u);
          \draw[arete,draw=oneS] (\u,\u)-- node[lab,swap] {$1$}
          (0,\u);
          \draw[arete,draw=zeroS] (0,\u)-- node[lab] {$0$} (0,0);
          \draw[arete,draw=twoS] (a) [bend right=50] to node[lab,swap] {$2$} (b);
          \draw[arete,draw=twoS] (c) [bend left=50] to node[lab] {$2$} (d);
          
          \node[vertex] at (0,0) {};
          \node[vertex] at (\u,0) {};
          \node[vertex] at (0,\u) {};
          \node[vertex] at (\u,\u) {};
        \end{tikzpicture}
        \caption{$\Gamma_{\!s}{[C;02]}$}
        \label{SpSubg3}
      \end{subfigure}
  \caption{Special subgraphs of coloured graphs}
  \label{f-SpecialSubgraphs}
\end{figure}
\begin{lemma}
  \label{thm-Deltac}
  Let $\Gamma$, $C$, $k$ and $t$ be as in
  \cref{def-SpecialSubgraphs}. Then
  \begin{equation*}
    \begin{split}
      \Delta^{C}_{tk}\ \defi&\ 
      B^{\set{tk}}(\Gamma^{C})-B^{\set{tk}}(\Gamma)\\
      =&-2B^{\set{tk}}(\Gamma_{\!s}[C;tk])
      -\card C+n(\Gamma_{\!s}[C;tk])-\gamma(\Gamma_{\!s}[C;tk])+2k(\Gamma_{\!s}[C;tk]),
    \end{split}
  \end{equation*}
where $n(\Gamma_{\!s}[C;tk])$
is half of the number of vertices of $\Gamma_{\!s}[C;tk]$ 
and $k$
denotes the number of connected components.
\end{lemma}
\begin{proof}
  Let $\bar t$ be the unique element of $\set{i,j}\setminus\set{t}$. By definition,
  \begin{equation}
    \label{eq-Deltas}
    \begin{split}
      B^{\set{tk}}(\Gamma^{C})-B^{\set{tk}}(\Gamma)
      &=B^{\set{tk}}(\Gamma_{\!s}^C[C;tk])
      -B^{\set{tk}}(\Gamma_{\!s}[C;tk])\\
      &=B^{\set{\bar tk}}(\Gamma_{\!s}[C;tk])
      -B^{\set{tk}}(\Gamma_{\!s}[C;tk]).
    \end{split}
  \end{equation}
The surface corresponding to a $[2]$-coloured graph $\Gamma$ has Euler characteristic 
\begin{equation}
  \label{eq-EulerCharG}
  \chi(\Gamma)=2k(\Gamma)-\gamma(\Gamma)=B^{\set{ij}}(\Gamma)+B^{\set{ik}}(\Gamma)+B^{\set{jk}}(\Gamma)-n(\Gamma).
\end{equation}
For $\Gamma=\Gamma_{\!s}[C;tk]$ we have
$B^{\set{ij}}(\Gamma_{\!s}[C;tk])=\card C$ and \cref{eq-EulerCharG} gives
$$B^{\set{\bar tk}}(\Gamma_{\!s}[C;tk])=
-B^{\set{tk}}(\Gamma_{\!s}[C;tk])
      -\card C+n(\Gamma_{\!s}[C;tk])-\gamma(\Gamma_{\!s}[C;tk])+2k(\Gamma_{\!s}[C;tk]).
$$
Plugging it into \cref{eq-Deltas}, one gets the desired result.
\end{proof}
\begin{thm}
  \label{thm-GenusFormula}
  Let $\Gamma$ be a $[2]$-coloured graph and $C$ be a subset of $\cB^{\set{i,j}}(\Gamma)$. Let $k$ be the unique element of $[2]\setminus\set{i,j}$. Then,
  \begin{equation*}
    \gamma(\Gamma^{C})-\gamma(\Gamma)=
    -\Delta^{C}_{ik}(\Gamma)
     -\Delta^{C}_{jk}(\Gamma)
  \end{equation*}
where $\Delta^{C}_{tk}$ is given by \cref{thm-Deltac}.
\end{thm}
\begin{proof}
  One simply uses \cref{eq-EulerCharG} and remarks that
  $k(\Gamma^{C})=k(\Gamma)$,
  $n(\Gamma^{C})=n(\Gamma)$,
  and $B^{\set{ij}}(\Gamma^{C})=B^{\set{ij}}(\Gamma)$.
\end{proof}

\begin{rem}
  \Cref{thm-GenusFormula} alows to derive bounds on $\gamma(\Gamma^{C})-\gamma(\Gamma)$. For any $t\in\set{i,j}$ and any coloured graph $\Gamma$, the number of
  $tk$-cycle in $\Gamma_{\!s}[C;tk]$,
  $B^{\set{tk}}(\Gamma_{\!s}[C;tk])$, lies between $1$ and
  $n(\Gamma_{\!s}[C;tk])=\tfrac 12\sum_{c\in
    C}\text{length}(c)$. 
This gives
  \begin{equation*}
    |\gamma(\Gamma^{C})-\gamma(\Gamma)|\les\sum_{c\in C}\del{\text{length}(c)-2}.
  \end{equation*}
  This bound is optimal and \cref{f-OptimalGenusChange} shows an
  example where it is reached.
\end{rem}
\begin{figure}[!htp]
  \centering
  \begin{tikzpicture}[scale=2, rotate around x=-90, rotate around z=0]
    \foreach \i in {0,...,5}
    {
      \coordinate (\i) at ({cos(60/2+60*\i)},{sin(60/2+60*\i)},0);
      \coordinate (s\i) at ({cos(60/2+60*\i)},{sin(60/2+60*\i)},1);
    }
    \draw[arete, dashed, thick,draw=twoS] (1)-- node[lab,near end] {$2$} (s1)
    (2)-- node[lab,swap,near end] {$2$} (s2);
    \draw[arete,draw=twoS] (0)-- node[lab,swap] {$2$} (s0) (3)-- node[lab] {$2$}
    (s3) (4)-- node[lab,swap] {$2$} (s4) (5)-- node[lab,swap,near end] {$2$} (s5);
    \draw[arete,draw=twoS] (0)-- node[lab,swap] {$2$} (s0);
    \draw[arete, dashed, thick,draw=zeroS] (0)-- node[lab, swap,near start] {$0$} (1);
    \draw[arete, dashed, thick,draw=oneS] (1)--
    node[lab, swap,near start] {$1$} (2);
    \draw[arete, dashed, thick,draw=zeroS] (2)-- node[lab, swap] {$0$} (3);
    \draw[arete,draw=zeroS] (s0)-- node[lab, swap] {$0$} (s1);
    \draw[arete,draw=oneS] (s1)--
    node[lab, swap] {$1$} (s2);
    \draw[arete,draw=zeroS] (s2)-- node[lab, swap] {$0$} (s3);
    \draw[arete,draw=oneS] (s3)--
    node[lab, near end] {$1$} (s4);
    \draw[arete,draw=zeroS] (s4)-- node[lab] {$0$} (s5);
    \draw[arete,draw=oneS] (s5)-- node[lab] {$1$}
    (s0);
    \draw[arete,draw=oneS] (3)-- node[lab,swap] {$1$} (4);
    \draw[arete,draw=zeroS] (4)-- node[lab,swap] {$0$}
    (5);
    \draw[arete,draw=oneS] (5)-- node[lab,swap] {$1$} (0);
    \foreach \i in {0,...,5}
    {
      \node[vertex] at (\i) {};
      \node[vertex] at (s\i) {};
    }
  \end{tikzpicture}
  \caption{A coloured graph $\Gamma$ such that
    $g(\Gamma^C)-g(\Gamma)=2$, $C$ is anyone of its $01$-cycles.}
\label{f-OptimalGenusChange}
\end{figure}

Given the bijection between hypermaps and $[2]$-coloured graphs,
ribbon graphs are $[2]$-coloured graphs, the $02$-cycles of which all
have length four. \Cref{thm-GenusFormula} then applies and allows to
quantify the change of topology of ribbon graphs under partial
duality:
\begin{cor}
  \label{thm-GenusDualRibbons}
  Let $G$ be a ribbon graph and $E'$ be a subset of its edges. Let
  $G[E']$ be the subribbon graph of $G$ induced by $E'$ and
  $G^{*}[E']$ be the subribbon graph of its Euler-Poincaré dual
  $G^{*}$ induced by $E'$. Then
  \begin{equation*}
    \tfrac
    12\big(\gamma(G^{E'})-\gamma(G)\big)=v(G[E'])+v(G^{*}[E'])-\card
    E'-\tfrac 12\chi(G[E'])-\tfrac 12\chi(G^{*}[E']).
  \end{equation*}
\end{cor}
\begin{proof}
  With our conventions, $C=E'$ is a subset of $02$-cycles,
  vertices are $12$-cycles and faces are $01$-cycles. Thus,
  $\Gamma_{\!s}[E';12]=G[E']$, $\Gamma_{\!s}[E';01]=(G^{*}[E'])^{*}$,
  $n(\Gamma_{\!s}[E';1t])=2\card E'$, and the \namecref{thm-GenusDualRibbons} follows.
\end{proof}

In particular, for partial duality of ribbon graphs relative to a single edge, 
$\card E'=1$, this \namecref{thm-GenusDualRibbons} immediately gives the results of 
\cite[Table 1.1]{Gross2020aa} which were recently used in \cite{Chmutov2021aa} to prove one of the conjectures from \cite{Gross2020aa}.

\section{Directions of future research} \label{sec-future}

\begin{itemize}
\item The paper \cite{Gross2020aa} contains several interesting conjectures about partial-dual genus distribution polynomial for ribbon graphs. One of them was recently proved in  \cite{Chmutov2021aa}. The definition of this polynomial works for hypermaps as well. It would be interesting to formulate and prove them for hypermaps.

\item Maps (ribbon graphs) provide a special class of $\Delta$-matroids (Lagrangian matroids) \cite{Chun2014}.
Are there any matroid type structure underlying the concept of hypermaps? Can the general Coxeter matroids be obtained from hypermaps?

\item It would be interesting to study higher dimensional partial duality concept as it outlined in Section \ref{ssec-pdual-cgr}. In particular, is it true that a partial dual to a $[d]$-coloured graph corresponding to a manifold is also a manifold? 
\end{itemize}

\newpage
\printbibliography

\contactrule
\contactSChmutov
\contactFVT

\end{document}